\newtheorem{theorem}{Theorem}[section]
\newtheorem{lemma}{Lemma}[section]
\newtheorem{remark}{Remark}[section]
\def\geq{\geqslant}\def\leq{\leqslant}
\begin{document}
\title{ \bf SDEs with subcritical Lebesgue--H\"{o}lder drift and driven by $\alpha$-stable processes}

\author{Rongrong Tian$^{a,b}$ and Jinlong Wei$^c$\\ {\small \it $^a$School of Mathematics and Statistics, Wuhan University of Technology}  \\ {\small \it  Wuhan, 430070, P.R.China}
\\ {\small \it $^b$Institut de Math\'{e}matiques de Bourgogne, Universit\'{e} Bourgogne Europe}
  \\ {\small \it  Dijon, 21000, France}  \\ {\small \tt  tianrr2018@whut.edu.cn} \\ {\small \it $^c$School of Statistics and Mathematics, Zhongnan University of} \\  {\small \it Economics and Law, Wuhan, 430073, P.R.China}  \\ {\small \tt   weijinlong.hust@gmail.com}}

\date{}
\maketitle
\noindent{\hrulefill}
\vskip1mm\noindent
{\bf Abstract} We obtain the unique weak and strong solvability for time inhomogeneous stochastic differential equations  with the  drift in subcritical Lebesgue--H\"{o}lder spaces $L^p([0,T];{\mathcal C}_b^{\beta}({\mathbb R}^d;{\mathbb R}^d))$ and driven by $\alpha$-stable processes for $\alpha\in (0,2)$. The weak well-posedness is derived for $\beta\in (0,1)$, $\alpha+\beta>1$ and $p>\alpha/(\alpha+\beta-1)$ through Prohorov's theorem, Skorohod's representation and the regularity estimates of solutions for a class of fractional Kolmogorov equations. The pathwise uniqueness and Davie's type uniqueness are proved for $\beta>1-
\alpha/2$ by using It\^{o}--Tanaka's trick. Moreover, we give a counterexample to the pathwise uniqueness for the supercritical Lebesgue--H\"{o}lder drift with $\alpha\in (0,2)$ to explain that the present result is sharp.

\vskip2mm\noindent {\bf MSC (2020):}  Primary: 60H10;  Secondary: 35R11.

\vskip2mm\noindent
{\bf Keywords:} Stochastic differential equation; Subcritical drift; $\alpha$-stable process; It\^{o}--Tanaka trick
 \vskip0mm\noindent{\hrulefill}

\section{Introduction}\label{sec1}\setcounter{equation}{0}
Consider the following stochastic differential equation (SDE for short)
\begin{equation}\label{1.1}
X_t=x+\int_0^tb(s,X_s)ds+L_t, \quad t\in [0, T], \quad x\in {\mathbb R}^d,
\end{equation}
where $T>0$ is a fixed final time horizon and the dimension $d\geq 1$. The drift coefficient $b:[0,T]\times {\mathbb R}^d
\rightarrow {\mathbb R}^d$ is Borel measurable, and $\{L_t\}_{t\in [0,T]}$ is a $d$-dimensional L\'{e}vy process on a given filtered probability space $(\Omega,{\mathcal F},\{{\mathcal F}_t\}_{t\in[0,T]},{\mathbb P})$, which satisfies the usual hypotheses of completeness and right continuity.

When $\{L_t\}_{t\in[0,T]}$ is a $d$-dimensional standard Brownian motion, the unique strong solvability of \eqref{1.1} with bounded drift was first established by Zvonkin \cite{Zvo} in the one-dimensional case ($d=1$), and later extended by Veretennikov \cite{Ver} to general dimensions $d>1$. The Sobolev differentiability of solutions in the spatial variables was obtained by Mohammed, Nilssen and Proske \cite{MNP}. More recently, Wei, Lv and Wang \cite{WLW} proved that the unique strong solution generates a stochastic flow of quasi-diffeomorphisms when the drift $b$ is Dini continuous in space. For H\"{o}lder continuous drifts, Flandoli, Gubinelli and Priola \cite{FGP1} established the existence of stochastic flows of diffeomorphisms. Their result was subsequently generalized in two directions: by Tian, Ding and Wei \cite{TDW}, and Wei, Duan, Gao and Lv \cite{WDGL} to the case of time-integrable drifts with exponent $q\geq 2$, and by Galeati and Gerencs\'{e}r \cite{GG}, as well as Wei, Hu and Yuan \cite{WHY1}, to the case $q\leq 2$, using different approaches.

If the drift is unbounded but exhibits spatial growth, strong well-posedness was established by Fang, Imkeller and Zhang \cite{FIZ}, Fang and Zhang \cite{FZ}, and also by Flandoli, Gubinelli and Priola \cite{FGP2}. In the case where $b$ belongs to the Krylov--R\"{o}ckner class, strong well-posedness for (\ref{1.1}) was first obtained by Krylov and R\"{o}ckner \cite{KR}. This was later extended to the local Krylov--R\"{o}ckner class by Xia, Xie, Zhang and Zhao \cite{XX}, and further to the critical case by Beck, Flandoli, Gubinelli and Maurelli \cite{BFGM}, Fang and Tian \cite{FT}, Kinzebulatov and Madou \cite{KM1,KM2}, Kinzebulatov, Semenov and Song \cite{KSS}, Krylov \cite{Kry21-1,Kry21-2,Kry21-3,Kry23-1,Kry23-2,Kry25}, Nam \cite{Nam}, R\"{o}ckner and Zhao \cite{RZ}, and Wei, Lv and Wu \cite{WLWu}. For more details on the case of nonconstant diffusion coefficients, we refer to Zhang and Yuan \cite{ZY}, and Zhang \cite{Zha05,Zha11}.

When $\{L_t\}_{t\in [0,T]}$ is non-Gaussian, the drift is time-independent and $d=1$, Tanaka, Tsuchiya and Watanabe \cite{TTW} first proved strong existence and uniqueness for the symmetric $\alpha$-stable process with $\alpha\in (0,2)$ and a H\"{o}lder continuous drift $b$ whose H\"{o}lder index exceeds $1-\alpha$. Since then, their result has been extended in several directions. For $d=1$, Mytnik and Weinberger \cite{MW} established strong well-posedness for drifts in the Kato class, while Kurenok \cite{Kurenok} obtained weak existence for time-dependent bounded drifts. In the case of nonconstant diffusion coefficients with time-independent $b$, weak existence and/or pathwise uniqueness have also been proved in \cite{Fournier,HW,LZ,XZZ}.

More recently, many researchers have studied the case $d>1$. For time-dependent $b$, Priola  \cite{Priola18} proved unique strong solvability of \eqref{1.1} when the drift is bounded (in time and space) and H\"{o}lder continuous (in space), and the driving process is of $\alpha$-stable type with $\alpha\in (0,2)$. Similar results for time-independent drift were established in \cite{Priola12,Priola15}. Later, Chen, Zhang and Zhao \cite{CZZ} extended these results to nonconstant diffusion coefficients and $\alpha$-stable-like processes. We also refer to \cite{HP} for the Malliavin differentiability of strong solutions, \cite{CSZ} for the existence of stochastic flows of diffeomorphisms, \cite{HW} for results with Besov--H\"{o}lder drifts.

For time-independent drifts belonging to certain Kato classes, Chen, Kim and Song \cite{CKS}, and Kim and Song \cite{KS} established weak well-posedness, while Bogachev and Pilipenko \cite{BP} proved  strong existence.  More recent generalizations include Athreya, Butkovsky and Mytnik \cite{ABM} (for distributional drifts); De Raynal, Menozzi and Priola \cite{DMP1} (for bounded and continuous drifts); and De Raynal and Menozzi \cite{DM}, Kremp and Perkowski \cite{KP}, and Song and Xie \cite{SX} (for Besov drifts). For further developments in this direction, see also \cite{Kulik,TWD,Zhang13}.

In the case where the drift is integrable only in time and $\{L_t\}_{t\in [0,T]}$ is a symmetric rotationally invariant $\alpha$-stable process, Tian and Wei \cite{TW} proved strong existence and pathwise uniqueness for $b\in L^p([0,T];{\mathcal C}^\beta_b({\mathbb R}^d;\mathbb{R}^d))$ with $\alpha\in [1,2)$ and $p>\max\{\alpha/(\alpha-1),2\alpha/(\alpha+2\beta-2)\}$. Butkovsky and Gallay \cite{BG} further obtained weak existence for $b\in L^p([0,T];L^q({\mathbb R}^d;{\mathbb R}^d))$ with $\alpha\in (1,2)$ and $(\alpha-1)/p+d/q<\alpha-1$. However, only limited progress has been made on the strong well-posedness of \eqref{1.1} for $\alpha\in(0,1)$ with time-integrable drifts.

In this paper, we first prove weak existence of \eqref{1.1} for $\alpha\in(0,2)$ and time-integrable drifts by combining Prohorov's theorem and Skorohod's representation. We then apply either It\^{o}--Tanaka's trick or Zvonkin's transformation to establish uniqueness in law, pathwise uniqueness, and Davie's type uniqueness.

\subsection{Setup and notations}
We provide a list of the main notations and conventions adopted throughout the paper.

$\bullet$ ${\mathbb N}$ denote the set of positive integers and ${\mathbb N}_0=\mathbb{N}\cup \{0\}$. $\mathbb{R}_+$ denotes the set of all positive real numbers. a.s. and a.e. are the abbreviations of almost surely and almost everywhere, respectively. For every $R>0$, $B_R:=\{x\in{\mathbb R}^d:|x|\leq R\}$.

$\bullet$  $\nabla$ denotes the gradient of a function with respect to the spatial variables.  For a given ${\mathbb R}^{n\times m}$ matrix-valued function $\Xi$ with $n,m\in {\mathbb N}$, $\|\Xi\|$ denotes its Hilbert--Schmidt norm.

$\bullet$ $\mathcal{C}_{bu}(\mathbb{R}^d)$ is the space consisting of all bounded uniformly continuous functions in $\mathbb{R}^d$. For $h\in \mathcal{C}_{bu}(\mathbb{R}^d)$, we set
\[
\|h\|_{{\mathcal C}_{bu}({\mathbb R}^d)}=\sup_{x\in {\mathbb R}^d}|h(x)|=:\|h\|_0.
\]

$\bullet$ For $\beta\in (0,1)$, the H\"{o}lder space ${\mathcal C}^\beta_b({\mathbb R}^d)$ consists of all bounded $\beta$-H\"{o}lder continuous functions. For $h\in{\mathcal C}^\beta_b({\mathbb R}^d)$, its norm is given by
\[
\|h\|_{{\mathcal C}^\beta_b({\mathbb R}^d)}=\sup_{x\in {\mathbb R}^d}|h(x)|+\sup_{x,y\in{\mathbb R}^d, x\neq y}\frac{|h(x)-h(y)|}{|x-y|^\beta} =:\|h\|_0+[h]_{\beta}=:\|h\|_\beta.
\]
By  \cite[Proposition 7, p.142]{Stein}, $h\in{\mathcal C}_b^\beta({\mathbb R}^d)$ if and only if $h\in{\mathcal C}_b({\mathbb R}^d)$ and there exists a positive constant $A$ such that
\[
\|\partial_\xi P_\xi h\|_0=\sup_{x\in{\mathbb R}^d}|\partial_\xi P_\xi h(x)|\leq A\xi^{-1+\beta},  \quad \forall \ \xi\in {\mathbb R}_+,
\]
where
\begin{equation}\label{1.2}
P_\xi h(x)=\frac{\Gamma(\frac{d+1}{2})}{\pi^{\frac{d+1}{2}}}\int_{{\mathbb R}^d}\frac{\xi h(x-z)}{(\xi^2+|z|^2)^{\frac{d+1}{2}}}dz, \quad \forall \ \xi\in {\mathbb R}_+,
\end{equation}
and $\Gamma$ is the Gamma function. Additionally, the norms $\|h\|_0+\sup_{\xi>0}[\xi^{1-\beta}\|\partial_\xi P_\xi h\|_0]$
and $\|h\|_\beta$ are equivalent to each other. Furthermore, if $\nabla^j h$ ($j$-th order gradient with respect to the spatial variables) is bounded and continuous for $j=0,1,2,\ldots,k\in {\mathbb N}$, and $[\nabla^k h]_\beta$ is finite, then we say $h\in {\mathcal C}_b^{k+\beta}({\mathbb R}^d)$.  For $h\in {\mathcal C}_b^{k+\beta}({\mathbb R}^d)$, its norm is defined by
\begin{equation*}
 \begin{split} \|h\|_{{\mathcal C}_b^{k+\beta}({\mathbb R}^d)}=&\;\sum_{j=0}^k\sup_{x\in{\mathbb R}^d}|\nabla^jh(x)|+\sup_{x,y\in{\mathbb R}^d, x\neq y}\frac{|\nabla^kh(x)-\nabla^kh(y)|}{|x-y|^{\beta}}
\nonumber\\ =&\;\sum_{j=0}^k\|\nabla^jh\|_0+[\nabla^kh]_\beta=:\|h\|_{k+\beta},
\end{split}
\end{equation*}
which is equivalent to
\[
\sum_{j=0}^k\|\nabla^jh\|_0+\sup_{\xi>0}\Big[\xi^{1-\beta}\|\partial_\xi P_\xi \nabla^kh\|_0\Big]=\sum_{j=0}^k\|\nabla^jh\|_0+\sup_{(\xi,x)\in {\mathbb R}_+\times {\mathbb R}^d }|\xi^{1-\beta}\partial_\xi P_\xi \nabla^kh(x)|.
\]

$\bullet$ For $p\in [1,\infty]$ and $k\in {\mathbb N}_0$, the Lebesgue--H\"{o}lder space $L^p([0,T];{\mathcal C}_b^{k+\beta}({\mathbb R}^d))$ denotes the space of $L^p([0,T])$-functions in time with values in ${\mathcal C}_b^{k+\beta}({\mathbb R}^d)$. For $g$ in this space, its norm is defined by
\begin{equation}\label{1.3}
 \begin{split}\|g\|_{L^p([0,T];{\mathcal C}_b^{k+\beta}({\mathbb R}^d))}=&\;\bigg[\sum_{j=0}^k\int_0^T \|\nabla^jg(t,\cdot)\|_0^pdt+\int_0^T[\nabla^k g(t,\cdot)]_{\beta}^pdt\bigg]^{\frac{1}{p}} \\ =:&\;\bigg[\sum_{j=0}^k\|\nabla^jg\|_{p,0}^p+[\nabla^kg]_{p,\beta}^p\bigg]^{\frac{1}{p}}
=:\|g\|_{p,k+\beta},
\end{split}
\end{equation}
which is equivalent to
\begin{equation}\label{1.4}
\begin{split} &\bigg[\sum_{j=0}^k\|\nabla^jg\|_{p,0}^p+\int_0^T\sup_{\xi>0}\|\xi^{1-\beta}\partial_\xi P_\xi \nabla^kg(t,\cdot)\|_0^p dt\bigg]^{\frac{1}{p}}\\ =&\;\bigg[\sum_{j=0}^k\|\nabla^jg\|_{p,0}^p+\int_0^T\sup_{(\xi,x)\in {\mathbb R}_+\times {\mathbb R}^d }|\xi^{1-\beta}\partial_\xi P_\xi \nabla^kg(t,x)|^p dt\bigg]^{\frac{1}{p}},
\end{split}
\end{equation}
where the integrals in \eqref{1.3} and \eqref{1.4} are interpreted as the essential supremum when $p=\infty$. The definition of the analogues spaces for $\mathbb{R}^d$ or $\mathbb{R}^{d\times d}$-valued functions is simply understood coordinate-wise. For simplicity of notation, if $f\in L^p([0,T];{\mathcal C}_b^{k+\beta}({\mathbb R}^d;{\mathbb R}^d))$, we also use $\|f\|_{L^p([0,T];{\mathcal C}_b^{k+\beta}({\mathbb R}^d))}$ or $\|f\|_{p,k+\beta}$ to denote its norm instead of the more precise but cumbersome $\|f\|_{L^p([0,T];{\mathcal C}_b^{k+\beta}({\mathbb R}^d;{\mathbb R}^d))}$; the same convention applies to other vector-valued or matrix-valued functions.

$\bullet$ For $\gamma\in (0,1)$, define the Bessel potential $\mathscr{I}_\gamma$ by $\mathscr{I}_\gamma=(I-\Delta)^{-\frac{\gamma}{2}}$. By \cite[Theorem 4, p.149]{Stein}, if $\beta+\gamma\notin \mathbb{N}$, then $\mathscr{I}_\gamma$ is an isomorphism from ${\mathcal C}_b^\beta({\mathbb R}^d)$ to ${\mathcal C}_b^{\beta+\gamma}({\mathbb R}^d)$; consequently, $\mathscr{I}_{-\gamma}$ is an isomorphism from ${\mathcal C}_b^{\beta+\gamma}({\mathbb R}^d)$ back to ${\mathcal C}_b^\beta({\mathbb R}^d)$. Similarly, for every $p\in [1,\infty]$ with $\beta+\gamma\notin \mathbb{N}$, $\mathscr{I}_\gamma$ maps $L^p([0,T];{\mathcal C}_b^\beta({\mathbb R}^d))$ isomorphically onto $L^p([0,T];{\mathcal C}_b^{\beta+\gamma}({\mathbb R}^d))$, and  $\mathscr{I}_{-\gamma}$ provides the inverse mapping.

$\bullet$ If $\{L_t\}_{t\in [0,T]}$ is a symmetric rotationally invariant $\alpha$-stable process on a filtered probability space $(\Omega,{\mathcal F},\{{\mathcal F}_t\}_{t\in[0,T]},{\mathbb P})$ with $\alpha \in (0,2)$, then the infinitesimal generator of its Markov semigroup is the fractional Laplacian $\Delta^{\frac{\alpha}{2}}=-(-\Delta)^{\frac{\alpha}{2}}$,  defined for sufficiently regular function $h$  by
\[
\Delta^{\frac{\alpha}{2}}h(x)=c_0\lim_{\varepsilon\downarrow
0}\int_{|y-x|>\varepsilon}\frac{h(y)-h(x)}{|x-y|^{d+\alpha}}dy
=c_0P.V.\int_{{\mathbb R}^d}\frac{h(y)-h(x)}{|x-y|^{d+\alpha}}dy,
\]
where $c_0=c_0(d,\alpha)$ is a constant depending only on $d$ and
$\alpha$.

$\bullet$ Let $K(t,x)$ be the heat kernel of the fractional Laplacian $\Delta^{\frac{\alpha}{2}}$. For all $m,n\in {\mathbb N}_0$, we have (see \cite[Lemma 2.2]{CZ})
\begin{equation}\label{1.5}
|\partial^m_t\nabla_x^nK(t,x)|\leq Ct(t^{\frac{1}{\alpha}}+|x|)^{-d-\alpha-n-\alpha m}, \quad t>0, \ x\in{\mathbb R}^d,
\end{equation}
where $C>0$ is a constant independent of $t$ and $x$.

\smallskip
$\bullet$ Let $N$ denote the Poisson random measure of the $\alpha$-stable process $L_t$. For $A\in \mathscr{B}(B_1\backslash\{0\})$,
\[
N((0,t],A)=\sum_{0<s\leq t}1_{A}(\Delta L_s)=\sharp\{0<s\leq t: \Delta L_s=L_s-L_{s-}\in A\},
\]
where $\mathscr{B}(B_1\backslash\{0\})$ denotes the Borel $\sigma$-algebra of $B_1\backslash\{0\}$.
The compensated Poisson random measure $\tilde{N}$ is defined by
\[
\tilde{N}((0,t],A)=N((0,t],A)-\nu(A)t, \ A\in \mathscr{B}(B_1\backslash\{0\}),
\]
where $\nu(dx)=c(d,\alpha)|x|^{-d-\alpha}dx$ is the L\'{e}vy measure. According to the L\'{e}vy--It\^{o} decomposition \cite[Theorem 2.4.16]{App},
\begin{equation}\label{1.6}
L_t=\int_0^t\int_{0<|z|< 1}z\tilde{N}(ds,dz)+\int_0^t\int_{|z|\geq 1}zN(ds,dz).
\end{equation}

$\bullet$ The letter $C$  denotes a positive constant, which may vary from line to line. For a parameter or a function $\zeta$, $C(\zeta)$ means the constant depends only on $\zeta$; when no confusion arises, it may be simply written as $C$. For two constants or functions $A_1$ and $A_2$, we use $A_1\wedge A_2:=\min\{A_1,A_2\}$ and $A_1\vee A_2:=\max\{A_1,A_2\}$.

\subsection{Main results}
We first use a scaling argument to unify several existing works on regularisation by noise (see \cite[Section 1.5]{BFGM} for the Brownian case with $L^q_tL^p_x$ drift), and then present our main results.

Assume that $\{L_t\}_{t\in [0,T]}$ is a symmetric rotationally invariant $\alpha$-stable process with $\alpha \in (0,2]$, with $\alpha=2$ corresponding to the standard Brownian motion. By the scaling property,
\begin{equation}\label{1.7}
Law(L_t)=\kappa^{-\frac{1}{\alpha}}Law(L_{\kappa t}), \ \ \forall \ \kappa>0.
\end{equation}
To ensure that the regularizing effects of $L_\cdot$ dominate the singularities of $b$ in \eqref{1.1}, it is natural to require that, under rescaling which preserves the noise strength, the nonlinearity vanishes; otherwise, the nonlinearity would dominate and may lead to pathologies such as coalescence or branching of solutions.

With \eqref{1.7} in
mind, we define the rescaled drift for $\vartheta\in (0,1)$ by
\begin{equation}\label{1.8}
b^\vartheta(t,x)=\vartheta^{1-\frac{1}{\alpha}}b(\vartheta t,\vartheta^{\frac{1}{\alpha}}x).
\end{equation}
Let $\mathbb{V}$ be a function space on ${\mathbb R}_+\times{\mathbb R}^d$. The leading-order seminorm of $b^\vartheta$, denoted by $[b^\vartheta ]_{\mathbb{V}}$, typically scales as $\vartheta^\delta[b]_{{\mathbb V}}$. We call ${\mathbb V}$ critical if $\delta=0$, subcritical if $\delta>0$, and supercritical if $\delta<0$.

$\bullet$ If $\alpha=2$ and ${\mathbb V}=L^q({\mathbb R}_+;L^p({\mathbb R}^d;{\mathbb R}^d))$ with $p,q\in [2,\infty]$, then
\[
[b^\vartheta]_{\mathbb{V}}=\|b^\vartheta\|_{L^q({\mathbb R}_+;L^p({\mathbb R}^d))}=
\vartheta^{\frac{1}{2}-\frac{d}{2p}-\frac{1}{q}}\|b\|_{L^q({\mathbb R}_+;L^p({\mathbb R}^d))}=
\vartheta^{\frac{1}{2}-\frac{d}{2p}-\frac{1}{q}}[b]_{{\mathbb V}}.
\]
Thus,
\begin{equation}\label{1.9}
\left\{
\begin{array}{ll}
 {\rm Subcriticality}: & \frac{d}{p}+\frac{2}{q}<1,  \\ [0.2cm]
{\rm Criticality}: & \frac{d}{p}+\frac{2}{q}=1,  \\ [0.2cm]
{\rm Supercriticality}: &  \frac{d}{p}+\frac{2}{q}>1. \end{array}
\right.
\end{equation}
This coincides with the classical Ladyzhenskaya--Prodi--Serrin condition for the Navier--Stokes equations (the subcritical case is also known as the Krylov--R\"{o}ckner condition). Strong well-posedness has been established in the subcritical case by Krylov and R\"{o}ckner \cite{KR}, and in the critical case by R\"{o}ckner and Zhao \cite{RZ}. In the supercritical case, the nonexistence of strong solution was shown by Krylov \cite{Kry21-1} for time-independent drifts, while weak existence was proved by Butkovsky and Gallay \cite{BG} under the condition $d/p+1/q<1$.

$\bullet$ If $\alpha=2$ and ${\mathbb V}=L^p({\mathbb R}_+;{\mathcal C}_b^\beta({\mathbb R}^d;{\mathbb R}^d))$ with $p\in (1,2)$, $\beta\in (0,1)$, then
\[
[b^\vartheta]_{\mathbb{V}}=[b^\vartheta]_{p,\beta}=
\vartheta^{\frac{1}{2}-\frac{1}{p}+\frac{\beta}{2}}[b]_{p,\beta}=
\vartheta^{\frac{1}{2}-\frac{1}{p}+\frac{\beta}{2}}[b]_{\mathbb{V}}.
\]
It follows that
\begin{equation}\label{1.10}
\left\{
\begin{array}{ll}
 {\rm Subcriticality}: & p>\frac{2}{1+\beta},  \\ [0.2cm]
{\rm Criticality}: & p=\frac{2}{1+\beta},  \\ [0.2cm]
{\rm Supercriticality}: & p<\frac{2}{1+\beta}. \end{array}
\right.
\end{equation}
Strong well-posedness in the subcritical case and ill-posedness in the supercritical case were proved by Galeati and Gerencs\'{e}r \cite{GG} via the stochastic sewing lemma, while an alternative proof of strong well-posedness in the subcritical case was obtained by Wei, Hu and Yuan \cite{WHY1} using It\^{o}--Tanaka's trick. %By applying the It\^{o}--Tanaka trick, Wei, Wang, Lv and Duan \cite{WWLD} further established the unique strong solvability  for the critical case under the  additional assumptions that $b$ is Lorentz integrable and H\"{o}lder--Dini continuous, respectively.

$\bullet$ If $\alpha\in (0,2)$ and ${\mathbb V}=L^\infty({\mathbb R}_+;\mathcal{B}^\beta_{p,\infty}({\mathbb R}^d;{\mathbb R}^d))$ with $p\in (1,\infty)$ and $\beta\in {\mathbb R}$, then
\begin{equation}\label{1.11}
[b^\vartheta]_{\mathbb{V}}=\|b^\vartheta\|_{L^\infty({\mathbb R}_+;\dot{\mathcal{B}}^\beta_{p,\infty}({\mathbb R}^d))}=
\vartheta^{1-\frac{1}{\alpha}-\frac{d}{\alpha p}+\frac{\beta}{\alpha}}\|b\|_{L^\infty({\mathbb R}_+;\dot{\mathcal{B}}^\beta_{p,\infty}({\mathbb R}^d))}=
\vartheta^{1-\frac{1}{\alpha}-\frac{d}{\alpha p}+\frac{\beta}{\alpha}}[b^\vartheta]_{\mathbb{V}},
\end{equation}
where  $\mathcal{B}^\beta_{p,\infty}({\mathbb R}^d)$ and $\dot{\mathcal{B}}^\beta_{p,\infty}$ denote the nonhomogeneous and homogeneous Besov spaces, respectively. Therefore,
\begin{equation}\label{1.12}
\left\{
\begin{array}{ll}
 {\rm Subcriticality}: & p>\frac{d}{\alpha+\beta-1} \ \ {\rm and} \ \ \alpha+\beta>1,  \\ [0.2cm]
{\rm Criticality}: & p=\frac{d}{\alpha+\beta-1}  \ \ {\rm and} \ \  \alpha+\beta>1,  \\ [0.2cm]
{\rm Supercriticality}: &  p<\frac{d}{\alpha+\beta-1} \ \ {\rm and} \ \ \alpha+\beta>1. \end{array}
\right.
\end{equation}
Weak well-posedness in the subcritical case was established by Song and Xie \cite{SX}. Pathwise uniqueness was further obtained under the additional assumptions $\beta>1-\alpha/2$ and $p>2d/\alpha$. However, the strong well-posedness and ill-posedness in the critical and supercritical cases remain open.

$\bullet$ If $\alpha\in (0,2)$ and ${\mathbb V}=L^p({\mathbb R}_+;{\mathcal C}_b^\beta({\mathbb R}^d;{\mathbb R}^d))$ with $p\in (1,\infty)$, $\beta\in (0,1)$, then
\[
[b^\vartheta]_{\mathbb{V}}=[b^\vartheta]_{p,\beta}=
\vartheta^{1-\frac{1}{\alpha}-\frac{1}{p}+\frac{\beta}{\alpha}}[b]_{p,\beta}=
\vartheta^{1-\frac{1}{\alpha}-\frac{1}{p}+\frac{\beta}{\alpha}}[b]_{\mathbb{V}}.
\]
Hence,
\begin{equation}\label{1.13}
\left\{
\begin{array}{ll}
 {\rm Subcriticality}: & p>\frac{\alpha}{\alpha+\beta-1} \ \ {\rm and} \ \ \alpha+\beta>1, \\ [0.2cm]
{\rm Criticality}: & p=\frac{\alpha}{\alpha+\beta-1}  \ \ {\rm and} \ \  \alpha+\beta>1,  \\ [0.2cm]
{\rm Supercriticality}: &  p<\frac{\alpha}{\alpha+\beta-1} \ \ {\rm and} \ \ \alpha+\beta>1. \end{array}
\right.
\end{equation}
The strong well-posedness and ill-posedness of \eqref{1.1} with the drift in the fully subcritical, critical and supercritical regimes remain unclear.

In this paper, we  use It\^{o}--Tanaka's trick to establish strong well-posedness in the subcritical regime and the comparison principle to prove the strong ill-posedness in the supercritical regime for \eqref{1.1}. Our first main result is given as the following.

\begin{theorem}\label{th1.1} Suppose $d\geq1$ and $b\in L^1([0,T];{\mathcal C}_{bu}({\mathbb R}^d;{\mathbb R}^d))$. Let $\{L_t\}_{t\in [0,T]}$ be a $d$-dimensional symmetric rotationally invariant $\alpha$-stable process with $\alpha \in (0,2)$ on a filtered probability space $(\Omega,{\mathcal F},$ $\{{\mathcal F}_t\}_{t\in[0,T]},{\mathbb P})$ starting from 0.

\smallskip
(i) There exists a weak solution to \eqref{1.1}, i.e. there is a filtered probability space $(\tilde{\Omega},\tilde{\mathcal{F}},$ $\{\tilde{\mathcal{F}}_t\}_{t\in [0,T]},\tilde{\mathbb{P}})$ with processes  $\{\tilde{L}_t\}_{t\in [0,T]}$ and $\{\tilde{X}_t\}_{t\in [0,T]}$ defined on it such that $\{\tilde{L}_t\}_{t\in [0,T]}$ is a $d$-dimensional
symmetric rotationally invariant $\alpha$-stable process  with $\alpha \in (0,2)$ and $\tilde{L}_0=0$, and $\{\tilde{X}_t\}_{t\in [0,T]}$ is
$\{\tilde{\mathcal{F}}_t\}_{t\in [0,T]}$-adapted and c\'{a}dl\`{a}g,
satisfying for all $t\in [0,T]$,
\begin{equation}\label{1.14}
\tilde{X}_t=x+\int_0^tb(s,\tilde{X}_s)ds+\tilde{L}_t, \ \ {\mathbb P}-a.s..
\end{equation}

(ii) If $b\in L^p([0,T];{\mathcal C}_b^{\beta}({\mathbb R}^d;{\mathbb R}^d))$ with $\beta\in (0,1)$ and $p\in (1,\infty]$ such that
\begin{equation}\label{1.15}
\alpha+\beta>1 \ \ {\rm and} \ \ p>\frac{\alpha}{\alpha+\beta-1},
\end{equation}
then the uniqueness in law holds.

\smallskip
(iii) If the condition in (ii) is satisfied and, in addition, $\beta>1-\alpha/2$, then pathwise uniqueness holds.

\smallskip
(iv) Under the assumptions in (iii), Davie's type uniqueness also holds. That is, there exists an event $\Omega^\prime\in {\mathcal F}$ with ${\mathbb P}(\Omega^\prime)=1$ such that for any $\omega\in\Omega^\prime, \, x\in{\mathbb R}^d$, the integral equation
\begin{equation}\label{1.16}
\varphi(t)=x+\int_0^tb(s,\varphi(s)+L_s(\omega))ds, \quad t\in [0,T],
\end{equation}
has exactly one solution $\varphi$ in ${\mathcal C}([0,T];{\mathbb R}^d)$.
\end{theorem}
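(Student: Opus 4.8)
The plan is to prove the four assertions in the natural logical order, using the Zvonkin/It\^o--Tanaka transformation built from an auxiliary fractional Kolmogorov equation as the common engine. First, for part (i), I would regularize the drift by mollification, say $b^n = b * \rho_n$ in space, so that $b^n$ is smooth and bounded, and consider the SDE $X^n_t = x + \int_0^t b^n(s,X^n_s)\,ds + L_t$, which has a (pathwise unique) solution since $b^n$ is Lipschitz in space. Then I would establish tightness of the laws of $\{(X^n,L)\}_n$ in the Skorokhod space $D([0,T];\mathbb R^{2d})$ using a moment/compactness criterion (Aldous' criterion or the standard moduli-of-continuity estimate), exploiting that $b \in L^1_t \mathcal C_{bu,x}$ gives a uniform bound $|\int_0^t b^n(s,X^n_s)\,ds| \le \int_0^T \|b(s,\cdot)\|_0\,ds$ so the "drift part" has uniformly bounded variation, and the $L_t$ part is a fixed process. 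By Prohorov's theorem we extract a weakly convergent subsequence, and by Skorokhod's representation theorem realize the limit on a new probability space $(\tilde\Omega,\tilde{\mathcal F},\tilde{\mathbb P})$ with a.s.\ convergence $\tilde X^n \to \tilde X$, $\tilde L^n \to \tilde L$; the limit $\tilde L$ is still a symmetric rotationally invariant $\alpha$-stable process starting at $0$ (characteristic functions pass to the limit), and one checks $\int_0^t b^n(s,\tilde X^n_s)\,ds \to \int_0^t b(s,\tilde X_s)\,ds$ a.s.\ by uniform continuity of $b(s,\cdot)$ together with the a.s.\ uniform convergence $\tilde X^n\to\tilde X$ — this uniform-continuity step is exactly why the hypothesis is $\mathcal C_{bu}$ rather than merely bounded measurable, and it is the one genuinely delicate point in part (i) since passing the drift term through the limit is not automatic for discontinuous $b$.

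For parts (ii)--(iv) the key tool is the vector-valued resolvent equation
\begin{equation}\label{eq:kolmo}
\partial_t u + \Delta^{\frac{\alpha}{2}} u + b\cdot\nabla u + b = \lambda u, \qquad u(T,\cdot)=0,
\end{equation}
solved componentwise; under \eqref{1.15}, i.e.\ $\alpha+\beta>1$ and $p>\alpha/(\alpha+\beta-1)$, the regularity estimates for fractional Kolmogorov equations referenced in the excerpt give a unique solution $u\in L^\infty([0,T];\mathcal C_b^{1+\gamma})$ for some $\gamma$ with $\alpha+\beta-1-\alpha/p>\gamma>0$, and moreover $\|\nabla u\|_{L^\infty_{t,x}}$ can be made arbitrarily small by taking $\lambda$ large. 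Then I would set $\Phi(t,x) = x + u(t,x)$, which for $\lambda$ large is, for each fixed $t$, a $C^1$ diffeomorphism of $\mathbb R^d$ with $\|\nabla u\|_0 \le 1/2$. The point is that if $X$ solves \eqref{1.1} with the given $\alpha$-stable noise, then an It\^o formula for jump processes applied to $\Phi(t,X_t)$ — using the L\'evy--It\^o decomposition \eqref{1.6} and the fact that $\partial_t u + \Delta^{\frac\alpha2}u + b\cdot\nabla u = \lambda u - b$ kills the singular drift — yields that $Y_t := \Phi(t,X_t)$ satisfies an SDE
\begin{equation*}
Y_t = \Phi(0,x) + \int_0^t \lambda u(s,\Phi^{-1}(s,Y_s))\,ds + \int_0^t\!\!\int_{0<|z|<1}\!\!\bigl[\Phi(s,\Phi^{-1}(s,Y_{s-})+z)-Y_{s-}\bigr]\tilde N(ds,dz) + (\text{big-jump term}),
\end{equation*}
whose coefficients are Lipschitz in $Y$ (because $\nabla u$ is $\gamma$-H\"older and bounded, and $z\mapsto \Phi(s,\cdot+z)-\cdot$ has the requisite integrability against $\nu$). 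Hence $Y$, and therefore $X=\Phi^{-1}(t,Y)$, is pathwise unique; this simultaneously gives uniqueness in law (combining with part (i)'s weak existence, Yamada--Watanabe then even upgrades to strong existence, though that is not claimed here). The condition $\beta>1-\alpha/2$ enters precisely in controlling the quadratic/compensated-jump term: one needs $\nabla u\in \mathcal C_b^\gamma$ with $\gamma$ large enough that the map $z\mapsto u(s,x+z)-u(s,x)-\nabla u(s,x)\cdot z\mathbf 1_{|z|<1}$ is integrable against $|z|^{-d-\alpha}\,dz$ near $0$ with a modulus that makes the jump SDE's coefficient Lipschitz — this forces $1+\gamma > \alpha$ at the level of second-order behavior, i.e.\ effectively $\beta > 1-\alpha/2$ after accounting for the gain $\alpha-\alpha/p$ in regularity. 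I expect this jump-term Lipschitz estimate, and the verification that the It\^o--Tanaka change of variables is justified for a merely $C^{1+\gamma}$ (not $C^2$) function against a pure-jump noise, to be the main technical obstacle.

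For part (iv), the Davie-type (path-by-path) uniqueness, I would fix $\omega$ in a full-measure set and study the ODE \eqref{1.16}. Writing $\psi(t) = \varphi(t) + L_t(\omega)$, any solution of \eqref{1.16} gives a solution $X_t = \psi(t)$ of \eqref{1.1} for that fixed realization of $L$, so applying the same transformation $\Phi$ to $\psi$ and using that the transformed equation is a genuine ODE with Lipschitz coefficients (the stochastic integrals having become, for fixed $\omega$, ordinary Lebesgue--Stieltjes integrals against the fixed c\`adl\`ag path $L_\cdot(\omega)$) forces uniqueness by a Gr\"onwall argument along $\Phi(t,\psi_1(t)) - \Phi(t,\psi_2(t))$. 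The one subtlety is that the It\^o--Tanaka identity used to linearize the equation is an a.s.\ statement, so one must argue that it holds on a single full-measure event $\Omega'$ simultaneously for all starting points $x$; this follows by taking $\Omega'$ to be the intersection over a countable dense set of $x$ and then using continuity of both sides of the transformed equation in $x$ (the solution map $x\mapsto X$ and $x\mapsto\varphi$ being continuous by the Lipschitz estimates already obtained), exactly as in Davie's original argument and in \cite{Priola18}. Existence of $\varphi$ for each $\omega,x$ is inherited from the weak existence in (i) together with uniqueness in law, which pins the solution to be $\omega$-adapted in Davie's sense; alternatively one invokes a direct Peano-type compactness argument for \eqref{1.16} using $b\in L^p_t\mathcal C^\beta_x \subset L^1_t\mathcal C_{bu,x}$.
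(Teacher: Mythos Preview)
Your treatment of part (i) matches the paper's.

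There is a genuine gap in your proof of part (ii). You propose to deduce uniqueness in law from pathwise uniqueness via the Zvonkin transform, but you yourself observe that the compensated-jump coefficient in the transformed SDE is Lipschitz only when $\beta>1-\alpha/2$. Part (ii), however, is asserted under \eqref{1.15} alone, which allows $\beta\le 1-\alpha/2$ (for instance $\alpha=3/2$, $\beta=1/10$, $p=\infty$). The paper's argument is different and does not pass through pathwise uniqueness: for each $f\in\mathcal{C}_0^\infty([0,T]\times\mathbb{R}^d)$ one solves the \emph{scalar} backward equation $\partial_t u+\Delta^{\alpha/2}u+b\cdot\nabla u=f$, $u(T,\cdot)=0$, applies It\^o's formula to $u(t,X_t)$ for any weak solution $X$, and takes expectations to obtain $u(0,x)=\mathbb{E}\int_0^T f(s,X_s)\,ds$. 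The expectation kills the jump martingale outright, so no $L^2(\nu)$-control of the increments of $u$ is needed and the regularity from Theorem \ref{th2.4} under \eqref{1.15} alone suffices; since the left side is independent of the particular weak solution, the law is determined.

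For (iii) and (iv) your overall strategy is close to the paper's, but the execution differs. In (iii) the paper does not invert $\Phi$ and write a new SDE for $Y=\Phi(t,X_t)$; it keeps $X$ and uses the It\^o--Tanaka representation \eqref{3.16} directly, bounding $\mathbb{E}\sup_r|X_r(x)-Y_r(y)|^2$ via Kunita's inequality and Gr\"onwall, with $\beta>1-\alpha/2$ entering through the convergence of $\int_{0<|z|<1}|z|^{2(\alpha+\beta-1-\epsilon\alpha/p)}\nu(dz)$. In (iv) the paper does \emph{not} fix $\omega$ and argue pathwise; instead it upgrades the estimate to $\sup_{\tau}\mathbb{E}\sup_{\tau\le r\le t}|X_{\tau,r}(x)-Y_{\tau,r}(y)|^{\hat q}\le C|x-y|^{\hat q}$ for every $\hat q\ge 1$ (this needs a more delicate interpolation than in (iii), splitting the jump contribution into the terms $J_1$ and $J_2$ of \eqref{3.28}) and then invokes \cite[Theorem 1.1]{Priola18}, which converts such uniform moment bounds directly into Davie-type uniqueness. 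Your direct pathwise route is not obviously wrong, but the step of treating the compensated Poisson integral as an $\omega$-by-$\omega$ Lebesgue--Stieltjes integral is not justified as stated; the paper's route via Priola's black box sidesteps this difficulty entirely.
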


\begin{remark} \label{rem1.2} (i) When $d=1$, uniqueness in law implies pathwise uniqueness. Thus, under the subcritical condition and for $d=1$, \eqref{1.1} is uniquely solvable in strong sense.

\smallskip
(ii) When $b$ is bounded and $\{L_t\}_{t\in [0,T]}$ is a $d$-dimensional standard Brownian motion,  Davie's type uniqueness (also called path-by-path uniqueness) was first established by Davie \cite{Dav07}. Priola later extended this result from Brownian motion to symmetric rotationally invariant $\alpha$-stable processes with $\alpha\in(0,2)$, proving Davie's type uniqueness for \eqref{1.1} with $b\in L^\infty([0,T];\mathcal C_b^\beta(\mathbb R^d;\mathbb R^d))$ and $\beta>1-\alpha/2$ (see \cite[Theorem 1.1]{Priola18}). More recently, Tian and Wei obtained the same result as Priola for $\alpha\in[1,2)$ under the weaker assumption $p>\max\{\alpha/(\alpha-1),2\alpha/(\alpha+2\beta-2)\}$ together with $\beta>1-\alpha/2$ (see \cite[Theorem 2]{TW}). In Theorem \ref{th1.1} (iv), however, the integrability index $p$ is only required to lie in the subcritical regime, thereby extending the existing results.
\end{remark}

The above strong  well-posedness for \eqref{1.1} is sharp, in the sense that if the drift $b$ belongs to the supercritical Lebesgue--H\"{o}lder space, pathwise uniqueness fails even when $\alpha\in (0,2)$, $\beta\in (0,1)$ and $\beta>1-\alpha/2$. The nonuniqueness is characterized by the following theorem.

\begin{theorem}\label{th1.3} Let $\{L_t\}_{t\in [0,T]}$ be as in Theorem \ref{th1.1}, and let $b\in L^p([0,T];{\mathcal C}_b^{\beta}({\mathbb R}^d;{\mathbb R}^d))$ with $d\geq 1$, $\beta\in (0\vee (1-\alpha),1)$ and $p\in [1,\infty)$. If $p<\alpha/(\alpha+\beta-1)$, then there exist an initial condition $x\in {\mathbb R}^d$ and a drift $b$ such that there exist two distinct solutions to \eqref{1.1}.
\end{theorem}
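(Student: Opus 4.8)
The plan is to reduce to one space dimension and to build a self-similar, genuinely time-singular scalar drift that creates a Peano-type branching which the additive $\alpha$-stable noise is, near the singular time, too weak to destroy.

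\emph{Step 1 (reduction and choice of drift).} I would let the drift act on the first coordinate only, $b(t,x)=(g(t,x_1),0,\dots,0)$; then $\|b(t,\cdot)\|_{{\mathcal C}^\beta_b({\mathbb R}^d)}\le\|g(t,\cdot)\|_{{\mathcal C}^\beta_b({\mathbb R})}$, the coordinates $i\ge2$ of any solution are the trivial $x_i+L^i_t$, and the first coordinate solves a scalar equation driven by the one-dimensional symmetric $\alpha$-stable process $L^1$. So it suffices to exhibit $g\in L^p([0,T];{\mathcal C}^\beta_b({\mathbb R};{\mathbb R}))$ and an initial point (I take $x_1=0$) for which the scalar equation has two solutions. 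Given $p<\alpha/(\alpha+\beta-1)$, fix a small $\eta>0$ and set $\gamma=1-\tfrac1\alpha+\eta$, $\delta=1-\gamma=\tfrac1\alpha-\eta\in(0,\tfrac1\alpha)$. Choose two distinct points $y_1\ne y_2$ and a bounded Lipschitz $h:{\mathbb R}\to{\mathbb R}$ that is identically equal to the constant $(1-\gamma)y_i$ on a small neighbourhood of $y_i$ for $i=1,2$; put
\[
g(t,x)=t^{-\gamma}h(t^{-\delta}x)\quad(0<t\le t_1),\qquad g(t,x)=0\quad(t_1<t\le T).
\]
Since $[g(t,\cdot)]_\beta\le t^{-\gamma-\delta\beta}[h]_\beta$ and $\|g(t,\cdot)\|_0=t^{-\gamma}\|h\|_0$, one has $g\in L^p([0,T];{\mathcal C}^\beta_b)$ as soon as $(\gamma+\delta\beta)p<1$; and $\gamma+\delta\beta=\gamma+(1-\gamma)\beta\to(\alpha+\beta-1)/\alpha$ as $\eta\downarrow0$, so this holds once $\eta$ is small enough. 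In particular $g$ lies in the supercritical Lebesgue--H\"older scale of \eqref{1.13}.

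\emph{Step 2 (the two solutions).} The point is that near each curve $\varphi^{(i)}(t):=y_it^\delta$ the drift $g(t,\cdot)$ is constant in $x$ and integrates exactly to that curve: if $|x-y_it^\delta|<\rho_it^\delta$ then $g(t,x)=(1-\gamma)y_it^{-\gamma}=\tfrac{d}{dt}\varphi^{(i)}(t)$, whence $\int_0^t g(s,\varphi^{(i)}(s)+\ell_s)\,ds=y_it^\delta$ for any path $\ell$ with $|\ell_s|<\rho_is^\delta$. Next I would use the almost-sure small-time bound $|L^1_s|\le C_\omega s^{\delta'}$, valid for any fixed $\delta'\in(\delta,\tfrac1\alpha)$, which follows from $\alpha$-stable scaling together with the maximal tail estimate ${\mathbb P}(\sup_{r\le1}|L^1_r|\ge\lambda)\lesssim\lambda^{-\alpha}$ via Borel--Cantelli — the series $\sum_n2^{-n(1-\alpha\delta')}$ converges precisely because $\delta'<\tfrac1\alpha$. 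Hence $t^{-\delta}|L^1_t|\le C_\omega t^{\delta'-\delta}\to0$, so there is a random $\tau(\omega)>0$ such that for $i=1,2$ the process $X^{(i)}_t:=L^1_t+y_it^\delta$ stays in the corresponding neighbourhood on $(0,\tau(\omega)]$; there $X^{(i)}_t=x_1+\int_0^t g(s,X^{(i)}_s)\,ds+L^1_t$ holds, i.e.\ $X^{(i)}$ solves the scalar equation on $(0,\tau]$, and $X^{(1)}_t-X^{(2)}_t=(y_1-y_2)t^\delta$ is a deterministic, nonzero function. Finally I would extend $X^{(1)},X^{(2)}$ from $(0,\tau]$ to all of $[0,T]$: on $(\tau,t_1]$ by a Carath\'eodory continuation (legitimate since $|g(t,\cdot)|\le t^{-\gamma}\|h\|_0\in L^1(0,T)$), and on $(t_1,T]$ — where $g\equiv0$ — by $X^{(i)}_t=X^{(i)}_{t_1}+(L^1_t-L^1_{t_1})$. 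This produces two solutions of the scalar equation on $[0,T]$ that are not identical; reassembling the $d$ coordinates gives two distinct solutions of \eqref{1.1}.

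\emph{The main obstacle} is precisely Step 2's control of the $\alpha$-stable noise near the singular time $t=0$. Unlike the Brownian case, $L^1$ has heavy-tailed jumps, so $\limsup_{t\downarrow0}|L^1_t|/t^{1/\alpha}=+\infty$ a.s.; the noise is therefore not uniformly small at the solution scale $t^\delta$, and one only has the weaker a.s.\ bound $|L^1_t|\le C_\omega t^{\delta'}$ with $\delta'<1/\alpha$. This is exactly what forces $\delta=1-\gamma<1/\alpha$, i.e.\ $\gamma>1-1/\alpha$; combined with the time-integrability requirement $(\gamma+(1-\gamma)\beta)p<1$, whose left-hand side has infimum $(\alpha+\beta-1)/\alpha$ over admissible $\gamma$, this is the mechanism that pins the nonuniqueness threshold at $p<\alpha/(\alpha+\beta-1)$, matching the well-posedness range of Theorem \ref{th1.1}. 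A secondary technicality is the design of $h$ (bounded, Lipschitz, locally constant along the two channels); a capped piecewise-affine profile suffices. I note that the persistence of the branching under the noise could alternatively be obtained without the explicit computation, by approximating $g$ from above and below by smooth drifts and invoking the one-dimensional comparison principle, the supercritical scaling forcing a gap between the minimal and maximal solutions.
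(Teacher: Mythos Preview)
Your proposal is correct and yields the same threshold, but the route differs from the paper's. The paper takes the classical Tanaka--Tsuchiya--Watanabe path: it chooses the spatially \emph{non-Lipschitz} drift $b(t,x)=t^{-1/\hat p}\,\mathrm{sign}(x)|x|^\beta$ (cut off away from the origin) with $p<\hat p<\alpha/(\alpha+\beta-1)$, then builds the \emph{maximum} and \emph{minimum} solutions as monotone limits of the unique solutions to Lipschitz approximations from above and below, using a one-dimensional comparison argument (chain rule for $x\mapsto x_+$ and Gr\"onwall). Khinchin's iterated-logarithm law for $\alpha$-stable processes gives $|L_t|\le C_0 t^\delta$ with $\delta=(1-1/\hat p)/(1-\beta)<1/\alpha$, and a direct supersolution/subsolution computation then forces $X_{\max,t}\ge C_1 t^\delta>0>-C_1 t^\delta\ge X_{\min,t}$ near $t=0$. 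Your construction instead engineers a \emph{self-similar, spatially Lipschitz} drift that is locally constant along two prescribed channels $y_i t^\delta$, so the candidate solutions $L^1_t+y_i t^\delta$ are explicit and no comparison is needed; the same small-time control of $L^1$ (which you obtain via scaling, a maximal tail bound and Borel--Cantelli, equivalent to what the paper gets from Khinchin's law) traps $L^1$ inside the channels. What you gain is a shorter, fully constructive argument; what the paper's approach buys is the canonical maximal/minimal solutions and no need to handle the patching step. On that point, your ``Carath\'eodory continuation'' past the random time $\tau$ should be phrased a bit more carefully: since $h$ is Lipschitz, for $t\ge\tau>0$ the drift $g(t,\cdot)$ has Lipschitz constant $t^{-1}[h]_{\mathrm{Lip}}$, bounded on $[\tau,t_1]$, so the SDE has a unique strong (hence adapted, c\`adl\`ag) continuation from $X^{(i)}_\tau$; this makes the extended $X^{(i)}$ genuine solutions on $[0,T]$. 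With that clarification your argument is complete; you even anticipate the paper's method in your closing remark about the comparison principle.
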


\section{Fractional Kolmogorov equations}\label{sec2}\setcounter{equation}{0}
In this section, let us discuss the following fractional Kolmogorov equation
\begin{equation}\label{2.1}
\left\{
\begin{array}{ll}
 \partial_t u(t,x)=\Delta^{\frac{\alpha}{2}} u(t,x)+b(t,x)\cdot \nabla u(t,x)-\lambda u(t,x)\\  [0.1cm] \qquad\qquad\quad +f(t,x), \ \ (t,x)\in(0,T] \times {\mathbb R}^d, \\ [0.1cm]
 u(t,x)|_{t=0}=0, \ \ x\in{\mathbb R}^d,
 \end{array}
\right.
\end{equation}
where the final time horizon $T>0$, the dimension $d\geq 1$, the parameter $\lambda\geq 0$  and the index $\alpha\in (0,2)$. The function $f:[0,T]\times {\mathbb R}^d\rightarrow {\mathbb R}$ and $b:[0,T]\times {\mathbb R}^d
\rightarrow {\mathbb R}^d$ are assumed to be Borel measurable. If $u\in
L^1([0,T];{\mathcal C}^{1\vee \alpha}({\mathbb R}^d))\cap W^{1,1}([0,T];{\mathcal C}({\mathbb R}^d))$ satisfying $u(t,x)|_{t=0}=0$ such that the first equation of \eqref{2.1} holds true for almost all $(t,x)\in (0,T)\times {\mathbb R}^d$,
then the unknown function $u$ is said to be a strong solution. Firstly, let us establish the well-posedness for \eqref{2.1}.

\begin{lemma} \label{lem2.1} Let $p\in (1,\infty]$, $\alpha\in (0,2)$, $\beta\in (0,1)$ and $\lambda\geq0$. Suppose that
\[
f\in L^p([0,T];{\mathcal C}^\beta_b({\mathbb R}^d)) \ \ {\rm and} \ \ b\in L^\infty([0,T];{\mathcal C}^\beta_b({\mathbb R}^d;{\mathbb R}^d))
\]
with $1<\alpha+\beta\neq 2$. Then there is a unique strong solution $u$ to \eqref{2.1}.

(i) If $\alpha\in [1,2)$, then
\[
u\in L^p([0,T];{\mathcal C}_b^{\alpha+\beta}({\mathbb R}^d))\cap W^{1,p}([0,T];{\mathcal C}_b^\beta({\mathbb R}^d)).
\]

(ii) If $\alpha\in(0,1)$, then for every $\theta\in(0,\alpha+\beta-1)$ we have
\[
u\in L^p([0,T];{\mathcal C}_b^{\alpha+\beta-\theta}({\mathbb R}^d))\cap W^{1,p}([0,T];{\mathcal C}_b^{\alpha+\beta-1-\theta}({\mathbb R}^d)).
 \]
Moreover, there exists a positive constant $C=C(d,T,\alpha,\beta,p,\lambda,\theta,\|b\|_{\infty,\beta})$ such that
\begin{equation}\label{2.2}
\|u\|_{p,\alpha+\beta-\theta}\leq C\|f\|_{p,\beta}.
\end{equation}
\end{lemma}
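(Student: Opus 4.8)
The plan is to pass to the mild formulation and treat $b\cdot\nabla u$ as a perturbation of the explicitly solvable fractional heat equation. Denoting by $P_t$ the convolution operator with the heat kernel $K(t,\cdot)$ of $\Delta^{\frac{\alpha}{2}}$, and using that the regularity demanded of a strong solution makes every term in \eqref{2.1} a genuine, pointwise-defined function, one checks that $u$ solves \eqref{2.1} if and only if
\[
u(t)=\int_0^te^{-\lambda(t-s)}P_{t-s}\bigl(b(s)\cdot\nabla u(s)+f(s)\bigr)\,ds=:\mathcal K\bigl(b\cdot\nabla u+f\bigr)(t),\qquad t\in[0,T].
\]
Thus the backbone of the proof is the linear ($b\equiv0$) Schauder estimate: for $g\in L^p([0,T];\mathcal C_b^{\beta}(\mathbb R^d))$ and $\alpha+\beta\notin\mathbb N$,
\[
\|\mathcal K g\|_{p,\alpha+\beta}+\|\partial_t\mathcal K g\|_{p,\beta}\le C\,\|g\|_{p,\beta}.
\]
I would establish this via the Poisson-extension characterisation of $\mathcal C_b^{k+\beta}$ recalled after \eqref{1.2}: apply $\partial_\xi P_\xi\nabla_x^k$ to $\mathcal K g$, insert the gradient and time-derivative bounds \eqref{1.5} for $K$, and reduce everything to scalar convolution inequalities in the time variable with kernels comparable to $(t-s)^{-1+\delta}$ with $\delta>0$, which are bounded on $L^p([0,T])$ by Young's inequality (this is where $p>1$ enters; for $p=\infty$ it is the classical parabolic Schauder estimate). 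The time-derivative bound then follows by reading $\partial_t\mathcal K g=\Delta^{\frac{\alpha}{2}}\mathcal K g-\lambda\mathcal K g+g$ off the equation. This already proves the lemma when $b\equiv0$, for every $\alpha\in(0,2)$.

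For $\alpha\in[1,2)$ (case (i)) the drift is a mild perturbation: if $u\in L^p([0,T];\mathcal C_b^{\alpha+\beta}(\mathbb R^d))$ then $\nabla u\in L^p([0,T];\mathcal C_b^{\alpha+\beta-1}(\mathbb R^d))\subseteq L^p([0,T];\mathcal C_b^{\beta}(\mathbb R^d))$, hence $b\cdot\nabla u\in L^p([0,T];\mathcal C_b^{\beta}(\mathbb R^d))$ with $\|b\cdot\nabla u\|_{p,\beta}\le C\|b\|_{\infty,\beta}\|u\|_{p,\alpha+\beta}$, so $u\mapsto\mathcal K(b\cdot\nabla u+f)$ maps $L^p([0,T];\mathcal C_b^{\alpha+\beta})$ into itself. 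I would obtain the fixed point by a standard argument: for $\alpha>1$ the operator $g\mapsto\nabla\mathcal K g$ has norm $O(\tau^{1-1/\alpha})$ on $L^p([0,\tau];\mathcal C_b^{\beta})$, giving a contraction on short subintervals, which are then patched together; the endpoint $\alpha=1$ is handled likewise, replacing the crude kernel bound by the genuine $L^p$-in-time (Calder\'on--Zygmund) maximal regularity and using an interpolation inequality and Gr\"onwall's lemma to absorb the lower-order terms. This gives existence, the regularity in (i) upon reading $\partial_t u$ off the equation, and uniqueness: the difference of two strong solutions solves the same equation with $f\equiv0$ and therefore vanishes.

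The substantive case is $\alpha\in(0,1)$ (case (ii)). Here $\nabla u$ for $u\in\mathcal C_b^{\alpha+\beta}$ is only $\mathcal C_b^{\alpha+\beta-1}$ with $\alpha+\beta-1<\beta$, while $\Delta^{\frac{\alpha}{2}}$ gains only $\alpha<1$ derivatives; a crude estimate of $\mathcal K(b\cdot\nabla u)$ produces the non-integrable time kernel $(t-s)^{-1/\alpha}$, so one cannot close at level $\alpha+\beta$. The remedy is the freezing/parametrix device: near a point $x_0$ compare $u$ with the solution of the frozen equation $\partial_t w=\Delta^{\frac{\alpha}{2}}w+b(t,x_0)\cdot\nabla w-\lambda w+f$, which is given explicitly by composing the heat semigroup with the deterministic translations $z\mapsto z+\int_s^tb(r,x_0)\,dr$ and is therefore bounded on every H\"older scale with the full gain $\alpha$. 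The discrepancy is driven by $[b(s,y)-b(s,x_0)]\cdot\nabla u(s,y)$, and the bound $|b(s,y)-b(s,x_0)|\le[b(s)]_\beta|y-x_0|^\beta$, integrated against $|K(t-s,\cdot)|$ through \eqref{1.5}, converts the critical singularity $(t-s)^{-1/\alpha}$ into $(t-s)^{-(1-\beta)/\alpha}$, which is integrable precisely because $\alpha+\beta>1$. Iterating this comparison, the resulting parametrix series converges --- at the cost of an arbitrarily small loss $\theta\in(0,\alpha+\beta-1)$, needed to keep the borderline space--time kernels summable in $L^p$ --- and produces $u\in L^p([0,T];\mathcal C_b^{\alpha+\beta-\theta})$ together with the quantitative bound \eqref{2.2}. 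Then $\partial_t u=\Delta^{\frac{\alpha}{2}}u+b\cdot\nabla u-\lambda u+f\in L^p([0,T];\mathcal C_b^{\alpha+\beta-1-\theta})$, the roughest contribution being $b\cdot\nabla u$; since $\theta<\alpha+\beta-1$ forces $\alpha+\beta-\theta>1=1\vee\alpha$, this $u$ lies in $L^1([0,T];\mathcal C^{1\vee\alpha})\cap W^{1,1}([0,T];\mathcal C)$, so it is the strong solution, unique by \eqref{2.2} applied with $f\equiv0$ to the difference of two solutions. The main obstacle is exactly this $\alpha<1$ parametrix step --- controlling the iterated space--time convolutions so that the expansion converges, and pinning down the loss $\theta$ --- whereas the case $\alpha\in[1,2)$ and the $b\equiv0$ Schauder estimate are comparatively routine.
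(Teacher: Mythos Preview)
Your proposal is correct and its core idea for the supercritical case $\alpha\in(0,1)$ --- freeze the drift to replace $b(s,y)$ by $b(s,y)-b(s,x_0)$ and use $|b(s,y)-b(s,x_0)|\le[b(s)]_\beta|y-x_0|^\beta$ to tame the kernel singularity from $(t-s)^{-1/\alpha}$ to $(t-s)^{(\beta-1)/\alpha}$ --- is exactly the mechanism the paper exploits. The executions differ in three respects worth noting. First, the paper freezes along the characteristic flow $\dot{\tilde x}_t=-b(t,x_0+\tilde x_t)$ rather than at a fixed spatial point; after the change of variables $\tilde u(t,x)=u(t,x+x_0+\tilde x_t)$ the shifted drift vanishes identically at $x=0$, so one is left with the pure fractional heat semigroup and the remainder $\tilde b(s,y)\cdot\nabla\tilde u(s,y)$ with $|\tilde b(s,y)|\le[b(s)]_\beta|y|^\beta$. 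This is slightly cleaner than your translated semigroup. Second, rather than summing a parametrix series, the paper first reduces to large $\lambda$ via $u\mapsto ue^{-(\bar\lambda-\lambda)t}$ and proves a priori estimates directly: the factor $\int_0^Te^{-\lambda s}s^{(\beta-1)/\alpha}\,ds$ in front of the drift contribution can be made $<1/2$, which absorbs the term in one step and avoids any iteration. Third, for the H\"older seminorm the paper applies $\partial_\xi P_\xi$ to the equation and controls the commutator $\partial_\xi P_\xi(b\cdot\nabla u)-b\cdot\partial_\xi P_\xi\nabla u$ by a lemma of Chen--Song--Zhang, which is where the loss $\theta\in(0,\alpha+\beta-1)$ enters concretely (one needs $(\beta-\theta-1)/\alpha>-1$ for the resulting time kernel to stay integrable). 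Your parametrix route would work too, but the paper's a priori/large-$\lambda$ argument is shorter and makes the dependence of the constant on $\|b\|_{\infty,\beta}$ transparent.
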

\begin{proof} Suppose that $u$ solves \eqref{2.1}. For any $\bar{\lambda}\in {\mathbb R}$, define $\bar{u}(t,x)=u(t,x)e^{-(\bar{\lambda}-\lambda)t}$. Then $\bar{u}$ solves the following Cauchy problem
\begin{equation*}
\left\{\begin{array}{ll}
\partial_{t}\bar{u}(t,x)=\Delta^{\frac{\alpha}{2}}\bar{u}(t,x)+b(t,x)\cdot\nabla \bar{u}(t,x) -\bar{\lambda} \bar{u}(t,x)\\ [0.1cm] \qquad\qquad\quad +\bar{f}(t,x), \ \ (t,x)\in (0,T]\times{\mathbb R}^d , \\ [0.1cm]
\bar{u}(t,x)|_{t=0}=0, \  x\in{\mathbb R}^d,
\end{array}\right.
\end{equation*}
where $\bar{f}(t,x)=f(t,x)e^{-(\bar{\lambda}-\lambda)t}$. Conversely, if $\bar{u}$ solves the above problem, then $u(t,x)=\bar{u}(t,x)e^{(\bar{\lambda}-\lambda)t}$ solves \eqref{2.1}.
Thus, it suffices to prove conclusions for sufficiently large $\lambda$.

\smallskip
(i) When the drift vanishes, by \cite[Lemma 3 and Remark 3]{TW}, there exists a unique solution
\[
u\in L^p([0,T];{\mathcal C}_b^{\alpha+\beta}({\mathbb R}^d))\cap W^{1,p}([0,T];{\mathcal C}_b^\beta({\mathbb R}^d))
\]
to the Cauchy problem \eqref{2.1}, which satisfies
\begin{equation}\label{2.3}
\|u\|_{p,\alpha+\beta}\leq C\|f\|_{p,\beta}.
\end{equation}
For general $b\in L^\infty([0,T];{\mathcal C}^\beta_b({\mathbb R}^d;{\mathbb R}^d))$, we suppose that  $u\in L^p([0,T];{\mathcal C}_b^{\alpha+\beta}({\mathbb R}^d))$ is a solution of \eqref{2.1}, then \eqref{2.3} implies
\[
\|u\|_{p,\alpha+\beta}\leq C\big[\|b\cdot \nabla u\|_{p,\beta}+\|f\|_{p,\beta}\big]\leq C\big[\|b\|_{\infty,\beta}\|\nabla u\|_{p,\beta}+\|f\|_{p,\beta}\big].
\]
On the other hand, the unique solution has the following heat-kernel representation (see \cite[Lemma 2.1]{TDW})
\begin{equation*}
u(t,x)=\int_0^te^{-\lambda (t-s)}ds\int_{{\mathbb R}^d}K(t-s,x-y)[b(s,y)\cdot \nabla u(s,y)+f(s,y)]dy.
\end{equation*}
Since $\alpha\in [1,2)$, for large enough $\lambda$, one concludes
\[
\|\nabla u\|_{p,\beta}\leq C\|f\|_{p,\beta}\big].
\]
Using the continuity method, we get the conclusion.

\smallskip
(ii) For $\alpha\in (0,1)$, it suffices to establish the a priori estimate \eqref{2.2} for smooth $u$.

Let $x_0\in {\mathbb R}^d$ and $\tilde{x}_t$ be a solution of the following ordinary differential equation (ODE)
\[
\dot{\tilde{x}}_t=-b(t,x_0+\tilde{x}_t), \quad \tilde{x}_t|_{t=0}=0.
\]
Define
\begin{equation*}
\left\{\begin{array}{ll}
\tilde{u}(t,x):=u(t,x+x_0+\tilde{x}_t), \ \tilde{b}(t,x):=b(t,x+x_0+\tilde{x}_t)-b(t,x_0+\tilde{x}_t), \\ [0.1cm] \tilde{f}(t,x):=f(t,x+x_0+\tilde{x}_t).  \end{array}\right.
\end{equation*}
Then $\tilde{u}$ satisfies
\begin{equation*}
\left\{\begin{array}{ll}
\partial_{t}\tilde{u}(t,x)=\Delta^{\frac{\alpha}{2}}\tilde{u}(t,x)+\tilde{b}(t,x)\cdot \nabla \tilde{u}(t,x) -\lambda \tilde{u}(t,x)\\ [0.1cm] \qquad\qquad\quad  +\tilde{f}(t,x), \ (t,x)\in (0,T]\times {\mathbb R}^d, \\ [0.1cm]
\tilde{u}(t,x)|_{t=0}=0, \  x\in{\mathbb R}^d.  \end{array}\right.
\end{equation*}
Moreover, $\tilde{u}$ has the following equivalent integral representation
\begin{equation}\label{2.4}
\tilde{u}(t,x)=\int_0^te^{-\lambda (t-s)}ds\int_{{\mathbb R}^d}K(t-s,x-y)[\tilde{b}(s,y)\cdot \nabla \tilde{u}(s,y)+\tilde{f}(s,y)]dy,
\end{equation}
where $K(t,x)$ is the heat kernel associated with the fractional Laplacian $\Delta^{\frac{\alpha}{2}}$.

For each $\theta\in [0, \beta]$, we have
\begin{equation}\label{2.5}
\left\{\begin{array}{ll}
|\tilde{b}(s,y)\cdot \nabla \tilde{u}(s,y)|\leq [b]_{\infty,\beta-\theta} |y|^{\beta-\theta} \|\nabla u(s,\cdot)\|_0, \\ [0.2cm]
|\tilde{f}(s,y)-\tilde{f}(s,0)|\leq [f(s,\cdot)]_{\beta-\theta} |y|^{\beta-\theta}.
\end{array}\right.
\end{equation}
Differentiating both sides of \eqref{2.4} with respect to $x$ and then taking the supremum in $x_0\in\mathbb R^d$ yield
\begin{equation}\label{2.6}
\begin{split}\|\nabla u(t,\cdot)\|_0 \leq& \;C(d,\alpha)\int_0^te^{-\lambda (t-s)}ds\int_{{\mathbb R}^d}
\frac{(t-s)|y|^\beta}{[(t-s)^{\frac{1}{\alpha}}+|y|]^{d+1+\alpha}}
\Big[[b]_{\infty,\beta} \|\nabla u(s,\cdot)\|_0+[f(s,\cdot)]_\beta\Big] dy\\ \leq &\;C(d,\alpha,\beta)\int_0^te^{-\lambda (t-s)}(t-s)^{\frac{\beta-1}{\alpha}}
\Big[[b]_{\infty,\beta} \|\nabla u(s,\cdot)\|_0+[f(s,\cdot)]_\beta\Big] ds,
\end{split}
\end{equation}
where the first inequality follows from \eqref{1.5} and \eqref{2.5}.

The same procedure used for every $0\leq \theta<\alpha+\beta-1<\beta$, also implies
\begin{equation}\label{2.7}
\begin{split}&\|\nabla u(t,\cdot)\|_0\\ \leq&\; C(d,\alpha)\int_0^te^{-\lambda (t-s)}ds\int_{{\mathbb R}^d}
\frac{(t-s)|y|^{\beta-\theta}}{[(t-s)^{\frac{1}{\alpha}}+|y|]^{d+1+\alpha}}
\Big[[b]_{\infty,\beta-\theta} \|\nabla u(s,\cdot)\|_0+[f(s,\cdot)]_{\beta-\theta} \Big] dy\\ \leq&\;C(d,\alpha,\beta,\theta)\int_0^te^{-\lambda (t-s)}(t-s)^{\frac{\beta-\theta-1}{\alpha}}
\Big[[b]_{\infty,\beta-\theta} \|\nabla u(s,\cdot)\|_0+[f(s,\cdot)]_{\beta-\theta}\Big] ds.
\end{split}
\end{equation}

By \eqref{2.6} and Young's inequality,  we conclude
\begin{equation}\label{2.8}
\|\nabla u\|_{p,0}\leq C(d,\alpha,\beta,p)
\Big[[b]_{\infty,\beta}\|\nabla u\|_{p,0}+
[f]_{p,\beta}\Big]\int_0^Te^{-\lambda s}s^{\frac{\beta-1}{\alpha}}ds.
\end{equation}
The parameter $\lambda$ is chosen to be large enough such that
\begin{equation}\label{2.9}
C(d,\alpha,\beta,p)[b]_{\infty,\beta}\int_0^Te^{-\lambda s}s^{\frac{\beta-1}{\alpha}}ds<\frac{1}{2}.
\end{equation}
By \eqref{2.8} and \eqref{2.9}, it gives rise to
\begin{equation}\label{2.10}
\|\nabla u\|_{p,0}\leq C(d,T,\alpha,p,\lambda,[b]_{\infty,\beta})
[f]_{p,\beta},
\end{equation}
which also suggests
\begin{equation}\label{2.11}
\|u\|_{p,0}\leq
 C(d,T,\alpha,\beta,p,\lambda,\|b\|_{\infty,\beta})\|f\|_{p,\beta},
\end{equation}
if ones uses \eqref{2.4}.

Let $P_\xi$ be given by \eqref{1.2} and set $v_\xi(t,x)=\partial_\xi P_\xi u(t,x)$. Then
\begin{equation}\label{2.12}
\left\{\begin{array}{ll}
\partial_{t}v_\xi(t,x)=\Delta^{\frac{\alpha}{2}} v_\xi(t,x)+b(t,x)\cdot\nabla v_\xi(t,x)
-\lambda v_\xi(t,x)\\ [0.1cm] \qquad\qquad\quad \ \ +g_\xi(t,x), \ \ (t,x)\in (0,T]\times{\mathbb R}^d, \\ [0.1cm]
v_\xi(t,x)|_{t=0}=0, \  x\in{\mathbb R}^d,
\end{array}\right.
\end{equation}
where
\[
g_\xi(t,x)=\partial_\xi P_\xi f(t,x)+\partial_\xi P_\xi(b(t,x)\cdot \nabla u(t,x))-b(t,x)\cdot \partial_\xi P_\xi \nabla u(t,x).
\]
With the aid of \eqref{2.7}, for every fixed $\theta\in [0,\alpha+\beta-1)$, we achieve
\begin{equation}\label{2.13}
\begin{split}
\|\nabla v_\xi(t,\cdot)\|_0 \leq&\; C\int_0^te^{-\lambda (t-s)}(t-s)^{\frac{\beta-\theta-1}{\alpha}}
\Big[[b]_{\infty,\beta-\theta}\|\nabla v_\xi(s,\cdot)\|_0+[g_\xi(s,\cdot)]_{\beta-\theta} \Big] ds
\\ \leq&\;
C\int_0^te^{-\lambda (t-s)}(t-s)^{\frac{\beta-\theta-1}{\alpha}}
\Big[[b]_{\infty,\beta-\theta}\|\nabla v_\xi(s,\cdot)\|_0+[\partial_\xi P_\xi f(s,\cdot)]_{\beta-\theta} \\ &+[\partial_\xi P_\xi(b(s,\cdot)\cdot \nabla u(s,\cdot))-b(s,\cdot)\cdot  \partial_\xi P_\xi \nabla u(s,\cdot)]_{\beta-\theta}   \Big] ds.
\end{split}
\end{equation}
By \cite[Lemma 2.1]{CSZ}, for every $0<\beta_1\leq \beta_2<1$, there exists a positive constant $C(d,\beta_1,\beta_2)$ such that
\[
[\partial_\xi P_\xi (h_1h_2)-h_1\partial_\xi P_\xi h_2]_{\beta_2-\beta_1}\leq C(d,\beta_1,\beta_2)[h_1]_{\beta_2}\|h_2\|_0\xi^{\beta_1-1},
\]
if $[h_1]_{\beta_2}$ and $\|h_2\|_0$ are finite. This also implies
\begin{equation}\label{2.14}
[\partial_\xi P_\xi f(s,\cdot)]_{\beta-\theta}\leq C(d,\beta,\theta)[f(s,\cdot)]_\beta\xi^{\theta-1}
\end{equation}
and
\begin{equation}\label{2.15}
[\partial_\xi P_\xi(b(s,\cdot)\cdot \nabla u(s,\cdot))-b(s,\cdot)\cdot  \partial_\xi P_\xi \nabla u(s,\cdot)]_{\beta-\theta}\leq C[b]_{\infty,\beta}\|\nabla u(s,\cdot)\|_0\xi^{\theta-1}.
\end{equation}
Taking into account \eqref{2.13}--\eqref{2.15}, we deduce
\begin{equation}\label{2.16}
\begin{split}\sup_{\xi>0}\| \xi^{1-\theta}\partial_\xi P_\xi \nabla u(t,\cdot) \|_0 \leq&\;
C\int_0^te^{-\lambda (t-s)}(t-s)^{\frac{\beta-\theta-1}{\alpha}}
\Big[[b]_{\infty,\beta-\theta}\sup_{\xi>0}\| \xi^{1-\theta}\partial_\xi P_\xi \nabla u(s,\cdot) \|_0\\ &+[b]_{\infty,\beta}\|\nabla u(s,\cdot)\|_0+[f(s,\cdot)]_\beta  \Big] ds.
\end{split}
\end{equation}
We use Young's inequality again to \eqref{2.16}, and get
\begin{equation*}
\begin{split}
&\Big\|\sup_{\xi>0}\| \xi^{1-\theta}\partial_\xi P_\xi \nabla u(t,\cdot) \|_0\Big\|_{L^p([0,T])}
\\ \leq&\; C(d,T,\alpha,\beta,p,\theta)\|b\|_{\infty,\beta}\int_0^Te^{-\lambda s} s ^{\frac{(\beta-\theta-1)}{\alpha}}ds\Big\|\sup_{\xi>0}\| \xi^{1-\theta}\partial_\xi P_\xi \nabla u(t,\cdot) \|_0\Big\|_{L^p([0,T])}
\\ &+C(d,T,\alpha,\beta,p,\theta)\Big[\|b\|_{\infty,\beta}
\|\nabla u\|_{p,0}+[f]_{p,\beta}\Big]
\int_0^Te^{-\lambda s} s^{\frac{(\beta-\theta-1)}{\alpha}}ds,
\end{split}
\end{equation*}
where $\theta\in (0,\alpha+\beta-1)$.

By taking $\lambda$ sufficiently big such that
\[
C(d,T,\alpha,\beta,p,\theta)\|b\|_{\infty,\beta}\int_0^Te^{-\lambda s}s^{\frac{(\beta-\theta-1)}{\alpha}}ds<\frac{1}{2},
\]
it yields to
\begin{equation}\label{2.17}
\Big\|\sup_{\xi>0}\| \xi^{1-\theta}\partial_\xi P_\xi \nabla u(t,\cdot) \|_0\Big\|_{L^p([0,T])}
\leq C(d,T,\alpha,\beta,p,\lambda,\theta,\|b\|_{\infty,\beta})
\|f\|_{p,\beta}.
\end{equation}
In conjunction with \eqref{1.3}, \eqref{1.4}, \eqref{2.10}, \eqref{2.11} and \eqref{2.17}, we conclude \eqref{2.2}. \end{proof}

\begin{remark} \label{rem2.2} (i) When $\alpha=2$, the maximal Lebesgue--Schauder estimates for \eqref{2.1} were first proved by Krylov \cite{Krylov02}. Here, we extend Krylov's result to the fractional Laplacian operator. Moreover, the estimate \eqref{2.2} is expected to hold for $\theta=0$ if one employs Calder\'{o}n--Zygmund type estimates; however, this issue is not pursued further in the present article.

(ii) When $b$ is locally H\"{o}lder continuous in $x$ and bounded in $t$, and when $f$ is bounded and H\"{o}lder continuous in $x$, the Schauder estimate \eqref{2.3} for $p=\infty$ was obtained by De Raynal, Menozzi and Priola \cite{DMP2}. We extend their result from time-bounded coefficients to time-integrable ones.
\end{remark}

Before establishing the well-posedness of the Cauchy prpblem \eqref{2.1}, we first introduce another useful lemma.
\begin{lemma}(\cite[Theorem 1, p.119]{Stein})\label{lem2.3} (Hardy--Littlewood--Sobolev convolution inequality) Let $\tilde{k}\in \mathbb{N}$, $1<p_1<\infty$ and define $\psi(y)=|y|^{-\tilde{k}/p_1}$. Let $1<p_2<p_3<\infty$ such that $1/p_1+1/p_2=1+1/p_3$. If $h\in L^{p_2}({\mathbb R}^{\tilde{k}})$, then $h\ast\psi\in L^{p_3}({\mathbb R}^{\tilde{k}})$ and there exists a positive constant $C(p_1,p_2)$ such that
\[
\|h\ast\psi\|_{L^{p_3}({\mathbb R}^{\tilde{k}})}\leq C(p_1,p_2)\|h\|_{L^{p_2}({\mathbb R}^{\tilde{k}})}.
\]
\end{lemma}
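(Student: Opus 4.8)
Lemma \ref{lem2.3} is the classical Hardy--Littlewood--Sobolev fractional-integration inequality, quoted verbatim from \cite[Theorem 1, p.119]{Stein}; in the paper one may simply cite that reference. Should a self-contained argument be wanted, the cleanest route (the range $1<p_2<p_3<\infty$ rules out the weak-type endpoints anyway) is the pointwise maximal-function bound of Hedberg rather than Marcinkiewicz interpolation. Write $\gamma:=\tilde k/p_1\in(0,\tilde k)$, so that $\psi(y)=|y|^{-\gamma}$ and the scaling relation $1/p_1+1/p_2=1+1/p_3$ reads $\tilde k-\gamma+\tilde k/p_3=\tilde k/p_2$, a fact used repeatedly below.

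I would first reduce to $h\ge 0$ (for general $h$, running the argument for $|h|$ first shows $|h|\ast\psi<\infty$ a.e., so $h\ast\psi$ is well defined with $|h\ast\psi|\le|h|\ast\psi$). Then, for each $x$ and each free radius $R>0$, I would split
\[
(h\ast\psi)(x)=\int_{|y|<R}h(x-y)|y|^{-\gamma}\,dy+\int_{|y|\ge R}h(x-y)|y|^{-\gamma}\,dy=:\mathrm{I}_R(x)+\mathrm{II}_R(x).
\]
For $\mathrm{I}_R$ I would decompose the ball into dyadic shells $\{R2^{-j-1}\le|y|<R2^{-j}\}$, $j\ge 0$, on each of which $|y|^{-\gamma}\approx(R2^{-j})^{-\gamma}$; combined with $\int_{|z|<\rho}|h(x-z)|\,dz\le C\rho^{\tilde k}Mh(x)$ and summation of the resulting geometric series (convergent precisely because $\gamma<\tilde k$, i.e.\ $p_1>1$), this yields $\mathrm{I}_R(x)\le CR^{\tilde k-\gamma}Mh(x)$, where $M$ is the Hardy--Littlewood maximal operator. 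For $\mathrm{II}_R$ I would use H\"older's inequality, $\mathrm{II}_R(x)\le\|h\|_{p_2}\big(\int_{|y|\ge R}|y|^{-\gamma p_2'}\,dy\big)^{1/p_2'}\le CR^{-\tilde k/p_3}\|h\|_{p_2}$, the tail integral converging exactly because $\gamma p_2'>\tilde k$, i.e.\ $1/p_1+1/p_2>1$ (equivalently $p_3<\infty$), and the exponent being computed from the scaling relation.

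Next I would balance the two pieces by choosing $R$ with $R^{\tilde k/p_2}=\|h\|_{p_2}/Mh(x)$ (legitimate for nontrivial $h$, since then $0<Mh(x)<\infty$ a.e.); using $\tilde k-\gamma+\tilde k/p_3=\tilde k/p_2$ this gives the pointwise estimate
\[
|(h\ast\psi)(x)|\le C\,\|h\|_{p_2}^{\,1-p_2/p_3}\,\big(Mh(x)\big)^{p_2/p_3}.
\]
Raising to the power $p_3$, integrating in $x$, and invoking the maximal inequality $\|Mh\|_{p_2}\le C(\tilde k,p_2)\|h\|_{p_2}$ (valid because $p_2>1$) then gives
\[
\|h\ast\psi\|_{p_3}^{p_3}\le C\,\|h\|_{p_2}^{\,p_3-p_2}\,\|Mh\|_{p_2}^{p_2}\le C\,\|h\|_{p_2}^{p_3},
\]
which is the assertion, with a constant depending only on $\tilde k,p_1,p_2$.

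There is no deep obstacle here; what must be watched is the exponent bookkeeping, which is entirely forced by $1/p_1+1/p_2=1+1/p_3$, together with the three places the hypotheses enter: $p_1>1$ (so the inner dyadic sum converges), $1/p_1+1/p_2>1$, i.e.\ $p_3<\infty$ (so the outer tail integral converges), and $p_2>1$ (so the Hardy--Littlewood maximal operator is bounded on $L^{p_2}$) --- precisely the standing assumptions of the lemma. An alternative would be to establish the weak-type bound $\|h\ast\psi\|_{L^{p_3,\infty}}\le C\|h\|_{p_2}$ directly, by the same splitting but optimizing $R$ inside the distribution function, and then upgrade to the strong-type estimate by Marcinkiewicz interpolation between two such bounds at a pair of exponents; this costs a little more setup and is unnecessary for the range at hand.
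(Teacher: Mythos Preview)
Your proposal is correct. The paper itself gives no proof of Lemma~\ref{lem2.3}: it is stated as a direct citation of \cite[Theorem~1, p.~119]{Stein} and used as a black box in the proof of Theorem~\ref{th2.4}. You correctly identify this at the outset, and then supply a clean self-contained proof via Hedberg's pointwise maximal-function estimate, which is the standard modern route; the exponent bookkeeping and the three places the hypotheses $p_1>1$, $p_2>1$, $p_3<\infty$ enter are all handled accurately.
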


Now, let us give our main result for \eqref{2.1}.
\begin{theorem} \label{th2.4} \textbf{(Existence and uniqueness)} Let $p\in (1,\infty]$, $\alpha\in (0,2)$, $\beta\in (0,1)$ and $\lambda\geq0$ such that
\begin{equation}\label{2.18}
\alpha+\beta>1 \ \ {\rm and} \ \ p>\frac{\alpha}{\alpha+\beta-1}.
\end{equation}
Suppose $b\in L^p([0,T];{\mathcal C}_b^{\beta}({\mathbb R}^d;{\mathbb R}^d))$ and $f\in L^p([0,T];{\mathcal C}_b^{\beta}({\mathbb R}^d))$. Then there exists a unique strong solution $u$ to the Cauchy problem \eqref{2.1}.

\smallskip
(i) \textbf{(Regularity)} If $\alpha-1-\alpha/p\leq 0$, the unique strong solution further belongs to
\begin{equation}\label{2.19}
\begin{split} {\mathcal H}=&\bigcap_{0\leq \theta <\alpha+\beta-1-\alpha/p}L^\infty([0,T];{\mathcal C}_b^{1+\theta}({\mathbb R}^d))
\\ &\qquad
 \bigcap \bigcap_{\alpha+\beta-1-\alpha/p<\theta<\alpha+\beta-1}
L^{\frac{p\alpha}{\alpha-p(\alpha+\beta-1-\theta)}}([0,T];{\mathcal C}_b^{1+\theta}({\mathbb R}^d)).
\end{split}
\end{equation}
Moreover, there exist positive constants $\varepsilon=\varepsilon(p,\alpha,\beta)$ and $C=C(d,\alpha,\beta,p,[b]_{p,\beta})$ such that, for every large enough $\lambda$,
\begin{equation}\label{2.20}
\sup_{(t,x)\in [0,T]\times {\mathbb R}^d}|\nabla u(t,x)| \leq C\lambda^{-\varepsilon}\|f\|_{p,\beta}.
\end{equation}

(ii) \textbf{(Regularity)} Otherwise, the unique strong solution satisfies \eqref{2.20} and belongs to
\begin{equation}\label{2.21}
\left\{
\begin{array}{ll}
L^p([0,T];{\mathcal C}_b^{\alpha+\beta}({\mathbb R}^d))\bigcap L^\infty([0,T];{\mathcal C}_b^{\alpha+\beta-\frac{\alpha}{p}}({\mathbb R}^d)), \ \ {\rm if} \ \alpha+\beta<2 \ {\rm or} \ \alpha+\beta-\frac{\alpha}{p}>2, \\ [0.2cm]  \bigcap\limits_{0<\theta <1} L^p([0,T];{\mathcal C}_b^{2-\theta}({\mathbb R}^d))\bigcap L^\infty([0,T];{\mathcal C}_b^{\alpha+\beta-\frac{\alpha}{p}}({\mathbb R}^d)), \ \   {\rm if}  \ \alpha+\beta=2, \\ [0.3cm]  L^p([0,T];{\mathcal C}_b^{\alpha+\beta}({\mathbb R}^d)) \bigcap \bigcap\limits_{0<\theta <1} L^\infty([0,T];{\mathcal C}_b^{2-\theta}({\mathbb R}^d)), \ \  {\rm if}  \ \alpha+\beta-\frac{\alpha}{p}=2.
 \end{array}
\right.
\end{equation}
\end{theorem}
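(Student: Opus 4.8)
The plan is to run a Banach fixed-point / continuity argument in the spirit of Lemma \ref{lem2.1}, reducing the non-constant-coefficient equation to the constant-coefficient fractional heat equation and absorbing the drift term $b\cdot\nabla u$ as a perturbation. The difference from Lemma \ref{lem2.1} is that here $b$ is only $L^p$ in time rather than $L^\infty$, so the time exponents produced by convolution with the heat kernel $K$ must be tracked carefully and combined via H\"older's inequality in time rather than with a bounded factor. First I would reduce, exactly as in the proof of Lemma \ref{lem2.1}, to the case of $\lambda$ large by the exponential substitution $\bar u(t,x)=u(t,x)e^{-(\bar\lambda-\lambda)t}$, so that smallness of the relevant time integral can be achieved. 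Then I would set up the solution map $u\mapsto v$ where $v$ solves the linear equation $\partial_t v=\Delta^{\alpha/2}v-\lambda v+(b\cdot\nabla u+f)$ with the heat-kernel (Duhamel) representation
\[
v(t,x)=\int_0^t e^{-\lambda(t-s)}\!\!\int_{\mathbb R^d}K(t-s,x-y)\big[b(s,y)\cdot\nabla u(s,y)+f(s,y)\big]\,dy\,ds.
\]
Here the source $b\cdot\nabla u+f$ lies in $L^p([0,T];\mathcal C_b^\beta)$ whenever $\nabla u\in L^{p'}([0,T];\mathcal C_b^\beta)$ for a suitable $p'$, so I need the gradient estimate for $v$ in an $L^{p'}$-in-time space with $p'$ large enough to close the loop; the subcriticality condition $p>\alpha/(\alpha+\beta-1)$ is precisely what makes this possible.

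The core estimate is the pointwise-in-time bound on $\|\nabla v(t,\cdot)\|_0$. Differentiating the Duhamel formula in $x$, using the freezing-the-drift trick (the ODE flow $\tilde x_t$ as in Lemma \ref{lem2.1}(ii)) to replace $b(s,y)\cdot\nabla\tilde u$ by $\tilde b(s,y)\cdot\nabla\tilde u$ with $|\tilde b(s,y)|\le[b(s,\cdot)]_\beta|y|^\beta$, and the kernel bound \eqref{1.5}, one gets
\[
\|\nabla v(t,\cdot)\|_0\le C\int_0^t e^{-\lambda(t-s)}(t-s)^{\frac{\beta-1}{\alpha}}\big[\,[b(s,\cdot)]_\beta\|\nabla u(s,\cdot)\|_0+[f(s,\cdot)]_\beta\,\big]\,ds.
\]
Now the weight $(t-s)^{(\beta-1)/\alpha}$ is locally integrable (since $\alpha+\beta>1$) but the factor $[b(s,\cdot)]_\beta$ is only $L^p$ in $s$; applying H\"older in time with exponents tied to $p$, together with the Hardy--Littlewood--Sobolev-type convolution inequality (Lemma \ref{lem2.3}), converts this into a bound of $\|\nabla v\|_{L^{p'}([0,T];\mathcal C_b^0)}$ (in fact of $\sup_t$ when $\alpha-1-\alpha/p>0$) by the same norm of $\nabla u$ with a small constant for $\lambda$ large, plus $C\|f\|_{p,\beta}$. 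A parallel computation for $v_\xi=\partial_\xi P_\xi v$, using the commutator estimate $[\partial_\xi P_\xi(h_1h_2)-h_1\partial_\xi P_\xi h_2]_{\beta_2-\beta_1}\le C[h_1]_{\beta_2}\|h_2\|_0\xi^{\beta_1-1}$ from \cite[Lemma 2.1]{CSZ} exactly as in \eqref{2.13}--\eqref{2.17}, upgrades this to a bound on the $\mathcal C_b^{1+\theta}$-seminorm in the appropriate $L^r$-in-time space, which is how the space $\mathcal H$ in \eqref{2.19} and the spaces in \eqref{2.21} arise: the admissible pairs $(\theta,r)$ are dictated by $\int_0^T s^{(\beta-\theta-1)/\alpha}e^{-\lambda s}ds<\infty$ combined with the time-H\"older bookkeeping, giving $r=p\alpha/(\alpha-p(\alpha+\beta-1-\theta))$. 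Uniqueness follows by applying the same a priori estimate to the difference of two solutions (with $f=0$), which forces $\nabla u\equiv0$ and hence $u\equiv0$ via \eqref{2.11}; the decay rate $\lambda^{-\varepsilon}$ in \eqref{2.20} is read off from $\int_0^T s^{(\beta-1)/\alpha}e^{-\lambda s}ds\le C\lambda^{-(1-\frac{1-\beta}{\alpha})}$ after the H\"older split, with $\varepsilon$ the resulting exponent after accounting for the $1/p$ loss.

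The main obstacle will be the careful bookkeeping of the time exponents when $b\in L^p$ rather than $L^\infty$: one must verify that the convolution $s^{(\beta-1)/\alpha}\mathbf 1_{s>0}\ast(\text{$L^p$ function})$ lands in the right $L^{p'}$ space with $p'$ matching the integrability needed to re-close the estimate, and that the subcriticality $p>\alpha/(\alpha+\beta-1)$ is exactly the threshold for this — equivalently that $\frac{1-\beta}{\alpha}+\frac1p<1$. The secondary technical point is handling the borderline exponents $\alpha+\beta=2$ and $\alpha+\beta-\alpha/p=2$ in \eqref{2.21}, where $\mathcal C_b^2$ is not reached and one must settle for $\bigcap_{0<\theta<1}\mathcal C_b^{2-\theta}$; this is because the Schauder estimate \eqref{2.3} for the constant-coefficient equation is only stated for $\alpha+\beta\neq2$, so near the integer exponent one loses an arbitrarily small power, exactly as in Lemma \ref{lem2.1}(ii). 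Everything else — the reduction to large $\lambda$, the freezing trick, the commutator estimate, and the continuity method to pass from small $[b]_{p,\beta}$ to general $b$ — is routine given the tools already assembled in Section \ref{sec2}.
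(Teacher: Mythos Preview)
Your plan uses the same analytical toolkit as the paper --- reduction to large $\lambda$, the freezing-the-drift trick, the kernel bound \eqref{1.5}, the Poisson-semigroup commutator estimate from \cite{CSZ}, and Hardy--Littlewood--Sobolev in time --- but the paper organises the existence proof differently. Rather than a direct Banach fixed point, for the main case $\alpha-1-\alpha/p\le0$ it mollifies $b$ in time to obtain $b_n\in L^\infty([0,T];\mathcal C_b^\beta)$, invokes Lemma~\ref{lem2.1} to produce solutions $u_n$, derives uniform-in-$n$ versions of exactly your gradient and H\"older estimates (the freezing and commutator arguments now appear as a~priori bounds on $u_n$ rather than as contraction estimates), and passes to the limit via Ascoli--Arzel\`a after applying the Bessel potential $\mathscr I_{-\gamma}$ and Lemma~\ref{lem2.3} to reach the $L^{p\alpha/[\alpha-p(\alpha+\beta-1-\theta)]}$ time exponents. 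For case~(ii) the paper cites \cite{TW} for the bulk and only fills in the endpoint $L^\infty_t\mathcal C_b^{\alpha+\beta-\alpha/p}$ regularity (including the case $\alpha+\beta=2$) by a direct kernel-splitting computation on $\nabla u(t,x)-\nabla u(t,y)$ into four pieces $I_1,\dots,I_4$, which your Poisson-semigroup route does not explicitly address. One muddle in your framing: you write that ``$b\cdot\nabla u+f$ lies in $L^p([0,T];\mathcal C_b^\beta)$ whenever $\nabla u\in L^{p'}([0,T];\mathcal C_b^\beta)$'', but that product bound would not close the loop (the constant-coefficient Schauder estimate returns $L^p_t\mathcal C_b^{\alpha+\beta}$, not $L^{p'}_t\mathcal C_b^{1+\beta}$); you correctly abandon it two lines later for the freezing trick, which only needs $\|\nabla u(s,\cdot)\|_0$ rather than $[\nabla u(s,\cdot)]_\beta$. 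Your fixed-point route would work once this is straightened out and the iteration space is specified, and it buys a self-contained argument; the paper's approximation-plus-compactness route buys the convenience of reusing Lemma~\ref{lem2.1} wholesale and avoids having to name a contraction space.
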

\begin{proof} Clearly, it suffices to show the conclusions for some large enough $\lambda$. We divide the proof into two cases: $\alpha-1-\alpha/p\leq 0$ and $\alpha-1-\alpha/p>0$.

For the case $\alpha-1-\alpha/p>0$, this implies $\alpha>1$ and $p>\alpha/(\alpha-1)$. If $\alpha+\beta\neq 2$, then by \cite[Theorem 1]{TW}, there is a unique strong solution to \eqref{2.1}. Furthermore,  \eqref{2.20} and \eqref{2.21} hold mutatis mutandis. On the other hand, for every $\beta^\prime\in (0,\beta)$, we have
\[
b\in L^p([0,T];{\mathcal C}_b^{\beta^\prime}({\mathbb R}^d;{\mathbb R}^d)) \ \ {\rm and} \ \ f\in L^p([0,T];{\mathcal C}_b^{\beta^\prime}({\mathbb R}^d))
\]
wheneven $b\in L^p([0,T];{\mathcal C}_b^{\beta}({\mathbb R}^d;{\mathbb R}^d))$ and $f\in L^p([0,T];{\mathcal C}_b^{\beta}({\mathbb R}^d))$. Applying \cite[Theorem 1]{TW} once more, \eqref{2.1} exists a unique strong solution that satisfies \eqref{2.20}. Moreover, if $\alpha+\beta=2$, then
\begin{equation}\label{2.22}
 u\in \bigcap_{0<\theta <\alpha+\beta-1-\alpha/p} \Big[L^p([0,T];{\mathcal C}_b^{2-\theta}({\mathbb R}^d))\cap L^\infty([0,T];{\mathcal C}_b^{\alpha+\beta-\frac{\alpha}{p}-\theta}({\mathbb R}^d))\Big].
\end{equation}
Thus, it remains to verify conclusions for $\alpha-1-\alpha/p\leq 0$ and
\begin{equation}\label{2.23}
u\in L^\infty([0,T];{\mathcal C}_b^{\alpha+\beta-\frac{\alpha}{p}}({\mathbb R}^d)), \ \  {\rm if}  \ \alpha+\beta=2 \ \ {\rm and} \ \ p>\frac{\alpha}{\alpha-1}.
\end{equation}
For \eqref{2.23}, by virtue of \eqref{2.22} and the continuity method, it is sufficient to show
\begin{equation}\label{2.24}
|\nabla u(t,x)-\nabla u(t,y)|\leq C\|f\|_{p,\beta}|x-y|^{\alpha+\beta-1-\frac{\alpha}{p}}, \quad \forall \ |x-y|\leq \frac{1}{3}, \ \ t\in [0,T],
\end{equation}
for regular solution $u$ with $b\equiv0$.

By the integral representation of $u$ (see \eqref{2.4} with $b\equiv0$), for every $x,y\in\mathbb{R}^d$ with $|x-y|\leq 1/3$, the following identity holds
\begin{equation}\label{2.25}
\begin{split}& \nabla u(t,x)-\nabla u(t,y)\\
=&\int_0^te^{-\lambda(t-s)}
ds\int_{|x-z|\leq 2|x-y|}\nabla K(t-s,x-z)[f(s,z)-f(s,x)]dz
\\&-\int_0^t
e^{-\lambda(t-s)}ds\int_{|x-z|\leq 2|x-y|}\nabla K(t-s,y-z)[f(s,z)-f(s,y)]dz
\\&+\int_0^t
e^{-\lambda(t-s)}ds\int_{|x-z|> 2|x-y|}\nabla K(t-s,y-z)[f(s,y)-f(s,x)]dz
\\
&+\int_0^t
e^{-\lambda(t-s)}ds\int_{|x-z|>2|x-y|}[\nabla K(t-s,x-z)-\nabla K(t-s,y-z)][f(s,z)-f(s,x)]dz \\ =&:I_1(t,x,y)+I_2(t,x,y)+I_3(t,x,y)+I_4(t,x,y).
\end{split}
\end{equation}
We first estimate $I_1$ and $I_2$ that
\begin{equation}\label{2.26}
\begin{split}
|I_1(t,x,y)|+|I_2(t,x,y)|\leq&\; C[f]_{p,\beta} \int_{|z|\leq 3|x-y|} |z|^\beta dz \bigg[ \int_0^T \frac{1}{[s^{\frac{1}{\alpha}}+|z|]^{(d+1)p^\prime}}
ds \bigg]^{\frac{1}{p^\prime}}
\\ \leq&\;   C[f]_{p,\beta} |x-y|^{\alpha+\beta-1-\frac{\alpha}{p}},
\end{split}
\end{equation}
where $p^\prime=p/(p-1)$.

Applying Gauss--Green's formula to $I_3$, we arrive at
\begin{equation}
\begin{split}\label{2.27}
|I_3(t,x,y)|=&\; d\bigg|\int_0^tds\int_{|x-z|=2|x-y|}K(t-s,y-z)[f(s,y)-f(s,x)]dS \bigg|
 \\
 \leq&\; \left\{
          \begin{array}{ll}
            C[f]_{p,\beta}|x-y|^{d+\beta-1}\Big[ \int_0^T \frac{1}{[s^{\frac{1}{\alpha}}+|x-y|]^{dp^\prime}}
ds \Big]^{\frac{1}{p^\prime}},  \ {\rm if} \ d\geq 2, \\
             C[f]_{p,\beta}|x-y|^\beta\Big[ \int_0^T
\Big|K(s,y-z)|_{z=x+2|x-y|}^{z=x-2|x-y|}\Big|^{p^\prime}ds\Big]^{\frac{1}{p^\prime}}, \  {\rm if} \ d=1,
          \end{array}
        \right.
% \\ [0.2cm] &\leq&\; \left\{ \begin{array}{ll} C[f]_{p,\beta}|x-y|^{\alpha+\beta-1-\frac{\alpha}{p}}, \ {\rm if} \ d\geq 2, \\ [0.2cm]    C|x-y|^{1+\beta}\Big[\int_0^T |\int_0^1[\partial_x K(s,2|x-y|-(x-y)+2\tau(x-y))]d\tau|^{p^\prime}ds\Big]^{\frac{1}{p^\prime}}, \  {\rm if} \ d=1,         \end{array}\right.
 \\ \leq&\;
 \left\{
          \begin{array}{ll}
           C[f]_{p,\beta}|x-y|^{\alpha+\beta-1-\frac{\alpha}{p}}, \ {\rm if} \ d\geq 2, \\
             C|x-y|^{1+\beta}\Big[\int_0^T
|\partial_xK(s,|x-y|)|^{p^\prime}ds\Big]^{\frac{1}{p^\prime}}, \  {\rm if} \ d=1,
          \end{array}
        \right.
 \\
 \leq&\; C[f]_{p,\beta}|x-y|^{\alpha+\beta-1-\frac{\alpha}{p}},
\end{split}
\end{equation}
where $dS$ denote the spherical surface measure and in the sixth line we used the Newton--Leibniz formula.

For $\tau\in [0,1]$, we have $|x-z|/2 \leq |\tau y+(1-\tau)x-z|\leq 2|x-z|$ whenever $|x-z|>2|x-y|$. Hence, by the mean value theorem,
\begin{equation}\label{2.28}
\begin{split}
|I_4(t,x,y)|\leq&\;  C|x-y|[f]_{p,\beta}
\int_{|z|>2|x-y|}|z|^\beta dz \bigg[\int_0^T \frac{1}{[s^{\frac{1}{\alpha}}+|z|]^{(d+2)p^\prime}}ds\bigg]^{\frac{1}{p^\prime}}
 \\ \leq&\; C[f]_{p,\beta}|x-y|^{\alpha+\beta-1-\frac{\alpha}{p}},
\end{split}
\end{equation}

Therefore, combining \eqref{2.25}--\eqref{2.28}, we obtain \eqref{2.24}. It remains to prove the conclusions for the case $\alpha-1-\alpha/p\leq 0$.

\smallskip
$\bullet$ \textbf{(Existence and regularity)} Notice that $b\in L^p([0,T];\mathcal{C}_b^\beta(\mathbb{R}^d;\mathbb{R}^d))$. Then there exists a version of $b$, still denoted by itself, such that $b(0,\cdot)\in \mathcal{C}_b^\beta(\mathbb{R}^d;\mathbb{R}^d)$. We extend $b$ from $[0,T]$ to $(-\infty,T]$ by setting
\[
b(t,x)=b(0,x), \quad {\rm if}  \ t<0.
\]
Let $\varrho$ be a nonnegative normalized mollifier in ${\mathbb R}$,
\[
0\leq \varrho \in {\mathcal C}^\infty_0({\mathbb R}), \ \  {\rm supp}(\varrho)\subset [0,1] \ \ {\rm and} \ \ \int_{{\mathbb R}}\varrho(t)dt=1.
\]
For $n\in{\mathbb N}$, define $\varrho_n(t)=n\varrho(nt)$ and smooth $b$ in time by convolution with  $\varrho_n$,
\[
b_n(t,x)=(b(\cdot,x)\ast \varrho_n)(t)=\int_{{\mathbb R}}b(t-s,x)\varrho_n(s)ds.
\]
Then $b_n\in L^\infty([0,T];{\mathcal C}_b^\beta({\mathbb R}^d;{\mathbb R}^d))$ and
\begin{equation}\label{2.29}
\|b_n\|_{p,0}\leq \|b\|_{p,0} \ \ \mbox{and} \ \  [b_n]_{p,\beta}\leq [b]_{p,\beta}, \quad \forall \ n\in {\mathbb N}.
\end{equation}
Moreover, for every $\beta^\prime\in (0,\beta)$,
\begin{equation}\label{2.30}
\left\{
\begin{array}{ll} \lim\limits_{n\rightarrow\infty}\|b_n-b\|_{p,\beta^\prime}=0, & {\rm if} \  p<\infty,\\ [0.2cm]
\lim\limits_{n\rightarrow\infty}\|b_n-b\|_{p_1,\beta^\prime}=0, \ \ \forall \ p_1\in (2,\infty), & {\rm if} \  p=\infty.
 \end{array}
\right.
\end{equation}

Consider the following Cauchy problem
\begin{equation}\label{2.31}
\left\{
\begin{array}{ll}
 \partial_t u_n(t,x)=\Delta^{\frac{\alpha}{2}} u_n(t,x)+b_n(t,x)\cdot \nabla u_n(t,x)\\ [0.1cm] \qquad\qquad \qquad  -\lambda u_n(t,x)+f(t,x), \ \ (t,x)\in(0,T] \times {\mathbb R}^d, \\ [0.1cm]
 u_n(t,x)|_{t=0}=0, \ \ x\in{\mathbb R}^d.
 \end{array}
\right.
\end{equation}

By Lemma \ref{lem2.1}, there exists a unique strong solution
$u_n$ that belongs to
\[
L^p([0,T];{\mathcal C}_b^{\alpha+\beta-\theta}({\mathbb R}^d))\cap W^{1,p}([0,T];{\mathcal C}_b^{\alpha+\beta-1-\theta}({\mathbb R}^d))
\]
for every $\theta \in (0,\alpha+\beta-1)$. Let $x_0\in {\mathbb R}^d$ and let $x_t^n$ be a solution of the following ODE
\[
\dot{x}_t^n=-b_n(t,x_0+x_t^n), \quad x_t^n|_{t=0}=0.
\]
Define $\hat{u}_n(t,x):=u_n(t,x+x_0+x_t^n)$, then$\hat{u}_n$ satisfies the integral equation \eqref{2.4} with
\[
\hat{b}_n(t,x):=b_n(t,x+x_0+x_t^n)-b_n(t,x_0+x_t^n) \ \ {\rm and} \ \ \hat{f}_n(t,x):=f(t,x+x_0+x_t^n).
\]
For every $0\leq \hat{\theta}\leq \beta$, we obtain the analogue of \eqref{2.5}
\begin{equation*}
\left\{\begin{array}{ll}
|\hat{b}_n(s,y)\cdot \nabla \hat{u}_n(s,y)|\leq [b_n(s,\cdot)]_{\beta-\hat{\theta}} |y|^{\beta-\hat{\theta}} \|\nabla u_n(s,\cdot)\|_0,  \\ [0.2cm] |\hat{f}_n(s,y)-\hat{f}_n(s,0)|\leq [f(s,\cdot)]_{\beta-\hat{\theta}} |y|^{\beta-\hat{\theta}}. \end{array}\right.
\end{equation*}

Furthermore, using the integral representation, we get analogues of \eqref{2.6} and \eqref{2.7} that
\begin{equation}\label{2.32}
\begin{split}\sup_{0\leq t\leq T}\|\nabla u_n(t,\cdot)\|_0\leq&\;
C(d,\alpha,\beta)\int_0^te^{-\lambda (t-s)}(t-s)^{\frac{\beta-1}{\alpha}}
\Big[[b_n(s,\cdot)]_\beta \|\nabla u_n(s,\cdot)\|_0+[f(s,\cdot)]_\beta \Big] ds
\\ \leq&\;
C(d,\alpha,\beta)\Big[[b]_{p,\beta}\sup_{0\leq s\leq T}\|\nabla u_n(s,\cdot)\|_0+[f]_{p,\beta}\Big] \bigg(\int_0^Te^{-\lambda p^\prime s}s^{\frac{(\beta-1)p^\prime}{\alpha}}ds\bigg)^{\frac{1}{p^\prime}}
\\ \leq&\;
C(d,\alpha,\beta)\Big[[b]_{p,\beta}\sup_{0\leq s\leq T}\|\nabla u_n(s,\cdot)\|_0+[f]_{p,\beta}\Big]
 \\ & \quad \times\bigg(\int_0^\infty e^{- p^\prime s}s^{\frac{(\beta-1)p^\prime}{\alpha}}ds\bigg)^{\frac{1}{p^\prime}}
\lambda^{-\frac{p(\alpha+\beta-1)-\alpha}{p\alpha}}
\end{split}
\end{equation}
and
\begin{equation}\label{2.33}
\begin{split}\|\nabla u_n(t,\cdot)\|_0
\leq&\; C(d,\alpha,\beta,\theta)\int_0^te^{-\lambda (t-s)}(t-s)^{\frac{\beta-1-\theta}{\alpha}}
\Big[[b_n(s,\cdot)]_{\beta-\theta} \|\nabla u_n(s,\cdot)\|_0+[f(s,\cdot)]_{\beta-\theta} \Big] ds,
\end{split}
\end{equation}
where $p^\prime=p/(p-1)$, $0\leq \theta<\alpha+\beta-1-\alpha/p\leq \beta$ and in the second inequality of \eqref{2.32} we have used \eqref{2.29} together with H\"{o}lder's inequality.

By condition \eqref{2.18}, i.e., $p>\alpha/(\alpha+\beta-1)$, we get $(\beta-1)p^\prime/\alpha>-1$. This, when combined with \eqref{2.32} and the fact that $\lambda$ is large enough, gives rise to
\begin{equation}\label{2.34}
\sup_{0\leq t\leq T}\|\nabla u_n(t,\cdot)\|_0\leq C(d,\alpha,\beta,p,[b]_{p,\beta})
[f]_{p,\beta}\lambda^{-\frac{p(\alpha+\beta-1)-\alpha}{p\alpha}},
\end{equation}
which further implies
\begin{equation}\label{2.35}
\sup_{0\leq t\leq T}\|u_n(t,\cdot)\|_0\leq
 C(d,\alpha,\beta,p,\lambda,\|b\|_{p,\beta})\|f\|_{p,\beta}.
\end{equation}

The arguments applied to $v_\xi(t,x)$ in \eqref{2.12}--\eqref{2.15}, now adapted to
\[
v_\xi^n(t,x)=\partial_\xi P_\xi u_n(t,x),
 \]
with the only modification that $[b_n(s,\cdot)]_{\beta-\theta}$ replaces $[b]_{\infty,\beta-\theta}$, lead to
\begin{equation}\label{2.36}
\begin{split}&\sup_{\xi>0}\| \xi^{1-\theta}\partial_\xi P_\xi \nabla u_n(t,\cdot) \|_0\\ \leq&\;
C(d,\alpha,\beta,\theta)\int_0^te^{-\lambda (t-s)}(t-s)^{\frac{\beta-\theta-1}{\alpha}}
\Big[[b_n(s,\cdot)]_{\beta-\theta}\sup_{\xi>0}\| \xi^{1-\theta}\partial_\xi P_\xi \nabla u_n(s,\cdot) \|_0\\ &+[b_n(s,\cdot)]_\beta\|\nabla u_n(s,\cdot)\|_0+[f(s,\cdot)]_\beta  \Big] ds.
\end{split}
\end{equation}

Define
\[
\bar{u}_n(t):=\sup_{\xi>0}\| \xi^{1-\theta}\partial_\xi P_\xi \nabla u_n(t,\cdot) \|_0.
\]
Then, by \eqref{2.34}, \eqref{2.36} and H\"{o}lder's inequality, we derive
\begin{equation}\label{2.37}
\begin{split}\|\bar{u}_n\|_{L^\infty([0,T])} \leq&\;
C(d,\alpha,\beta,\theta)\Big[\|\bar{u}_n\|_{L^\infty([0,T])}[b_n]_{p,\beta-\theta}+[b_n]_{p,\beta}\|\nabla u_n\|_{\infty,0}+\|f\|_{p,\beta}\Big]
\\ & \ \times
\bigg[\int_0^Te^{-\lambda s p^\prime} s^{\frac{(\beta-\theta-1)p^\prime}{\alpha}}ds\bigg]^{\frac{1}{p^\prime}} \\ \leq&\;
C(d,\alpha,\beta,p,\theta,\|b\|_{p,\beta})\Big[\|\bar{u}_n\|_{L^\infty([0,T])}\|b\|_{p,\beta}+\|f\|_{p,\beta}\Big]
\\ & \ \times \bigg[\int_0^\infty e^{-s p^\prime} s^{\frac{(\beta-\theta-1)p^\prime}{\alpha}}ds \bigg]^{\frac{1}{p^\prime}}\lambda^{-\frac{\alpha+\beta-1-\alpha/p-\theta}{\alpha}}
 \\ \leq&\;
C(d,\alpha,\beta,p,\theta,\|b\|_{p,\beta})\Big[\|\bar{u}_n\|_{L^\infty([0,T])}
 +\|f\|_{p,\beta}\Big]\lambda^{-\frac{\alpha+\beta-1-\alpha/p-\theta}{\alpha}},
\end{split}
\end{equation}
where $\theta\in [0,\alpha+\beta-1-\alpha/p)$. Since $\lambda$ is big enough, it suggests
\begin{equation}\label{2.38}
\|\bar{u}_n\|_{L^\infty([0,T])}
\leq C(d,\alpha,\beta,p,\theta,\|b\|_{p,\beta})\|f\|_{p,\beta}
\lambda^{-\frac{\alpha+\beta-1-\alpha/p-\theta}{\alpha}}.
\end{equation}
In conjunction with \eqref{1.3}, \eqref{1.4}, \eqref{2.34} and \eqref{2.37}, we have
\begin{equation}\label{2.39}
\|\nabla u_n\|_{\infty,\theta}\leq
C(d,\alpha,\beta,p,\theta,\|b\|_{p,\beta})\|f\|_{p,\beta}\lambda^{-\frac{\alpha+\beta-1-\alpha/p-\theta}{\alpha}}.
\end{equation}

On the other hand, if one defines
\[
\hat{v}_\xi^n(t,x):=\hat{v}_\xi^n(t,x+x_0+x_t^n),
\]
then $\nabla\hat{v}_\xi^n(t,x)$ satisfies the following integral equation
\begin{equation}\label{2.40}
\nabla\hat{v}_\xi^n(t,x)=\int_0^te^{-\lambda (t-s)}ds\int_{{\mathbb R}^d}\nabla K(t-s,x-y)[\hat{b}_n(s,y)\cdot \nabla\hat{v}_\xi^n(s,y)+ \hat{g}_n^\xi(s,y)]dy,
\end{equation}
where
\[
\hat{b}_n(s,y)=b_n(s,y+x_0+x_s^n)-b_n(s,x_0+x_s^n), \ \ \hat{g}_n^\xi(s,y)=g_n^\xi(s,y+x_0+x_s^n),
\]
and
\[
g_n^\xi(s,y)=\partial_\xi P_\xi f(s,y)+\partial_\xi P_\xi(b_n(s,y)\cdot \nabla u_n(s,y))-b_n(s,y)\cdot \partial_\xi P_\xi \nabla u_n(s,y).
\]
Let $\theta\in (0,\alpha+\beta-1-\alpha/p)$ and $\gamma\in (\alpha+\beta-1-\alpha/p-\theta,\alpha+\beta-1-\theta)$. Denote by $\mathscr{I}_\gamma$ the Bessel potential. From \eqref{2.40}, it follows that
\begin{equation}\label{2.41}
\begin{split} & \mathscr{I}_{-\gamma}\nabla \hat{v}_\xi^n(t,x) \\ =&\int_0^te^{-\lambda (t-s)}ds\int_{{\mathbb R}^d}\mathscr{I}_{-\gamma}\nabla K(t-s,x-y)[\hat{b}_n(s,y)\cdot \nabla\hat{v}_\xi^n(s,y)+ \hat{g}_n^\xi(s,y)]dy.
\end{split}
\end{equation}
By \eqref{1.5} and the interpolation inequality, we have
\begin{equation}\label{2.42}
\begin{split}&|\mathscr{I}_{-\gamma}\nabla K(t-s,x-y)|\\ \leq&\; C(d,\alpha,\gamma)(t-s)[(t-s)^\frac{1}{\alpha}+|x-y|)^{-d-\alpha-1}\Big\{1+[(t-s)^\frac{1}{\alpha}+|x-y|]^{-\gamma}\Big\}.
\end{split}
\end{equation}
This, along with \eqref{2.41} and \eqref{2.36}, yields the following
\begin{equation}\label{2.43}
\begin{split}&\sup_{\xi>0}\|\xi^{1-\theta} \partial_\xi P_\xi \mathscr{I}_{-\gamma} \nabla u_n(t,\cdot)\|_0=\sup_{\xi>0}\|\xi^{1-\theta}\mathscr{I}_{-\gamma}\nabla \hat{v}_\xi^n(t,\cdot)\|_0
 \\ \leq&\;C(d,\alpha,\beta,\theta,\gamma)\int_0^t(t-s)^{\frac{\beta-\theta-\gamma-1}{\alpha}}
\Big[[b_n(s,\cdot)]_{\beta-\theta}\sup_{\xi>0}\| \xi^{1-\theta}\partial_\xi P_\xi \nabla u_n(s,\cdot) \|_0\\ &+[b_n(s,\cdot)]_\beta\|\nabla u_n\|_{\infty,0}+[f(s,\cdot)]_\beta  \Big] ds.
\end{split}
\end{equation}

With the aid of Lemma \ref{lem2.3}, together with \eqref{2.34} and \eqref{2.39}, it follows from \eqref{2.43} that
\begin{equation}\label{2.44}
\begin{split} &\bigg[\int_0^T\sup_{(\xi,x)\in {\mathbb R}_+\times{\mathbb R}^d}|\xi^{1-\theta} \partial_\xi P_\xi \mathscr{I}_{-\gamma} \nabla u_n(t,x)|^{\frac{p\alpha}{\alpha-p(\alpha+\beta-1-\tilde{\theta})}}dt\bigg]^{\frac{\alpha-p(\alpha+\beta-1-\tilde{\theta})}{p\alpha}}
 \\  \leq&\; C(d,\alpha,\beta,p,\tilde{\theta},\|b\|_{p,\beta})\|f\|_{p,\beta},
\end{split}
\end{equation}
where $\tilde{\theta}=\theta+\gamma$ and in the above inequality relation \eqref{2.29} have been used. Consequently,
\[
\mathscr{I}_{-\gamma} \nabla u_n\in L^{\frac{p\alpha}{\alpha-p(\alpha+\beta-1-\tilde{\theta})}}([0,T];{\mathcal C}_b^\theta({\mathbb R}^d;{\mathbb R}^d)).
\]
Since the inverse operator $[\mathscr{I}_{-\gamma}]^{-1}=\mathscr{I}_{\gamma}$ maps ${\mathcal C}_b^{\theta}({\mathbb R}^d)$ isomorphically onto ${\mathcal C}_b^{\theta+\gamma}({\mathbb R}^d)$ whenever $\theta+\gamma\in (0,1)$, we obtain
\[
\nabla u_n\in \bigcap_{\tilde{\theta}\in (\alpha+\beta-1-\alpha/p,\alpha+\beta-1)}L^{\frac{p\alpha}{\alpha-p(\alpha+\beta-1-\tilde{\theta})}}([0,T];{\mathcal C}_b^{\tilde{\theta}}({\mathbb R}^d;{\mathbb R}^d)).
\]
Furthermore, for every $\tilde{\theta}\in (\alpha+\beta-1-\alpha/p,\alpha+\beta-1)$ there is a positive constant $C=C(d,\alpha,\beta,p,[b]_{p,\beta},\tilde{\theta})$ such that
\begin{equation}\label{2.45}
\|\nabla u_n\|_{p\alpha/[\alpha-p(\alpha+\beta-1-\tilde{\theta})],\tilde{\theta}} \leq
C(d,\alpha,\beta,p,\tilde{\theta},\|b\|_{p,\beta}) \|f\|_{p,\beta}.
\end{equation}

\eqref{2.45}, in conjunction with \eqref{2.39} and \eqref{2.35}, induces that
\begin{equation}\label{2.46}
\begin{split} u_n\in&  \bigcap_{0\leq \theta <\alpha+\beta-1-\alpha/p}L^\infty([0,T];{\mathcal C}_b^{1+\theta}({\mathbb R}^d))
\\ & \qquad\bigcap \bigcap_{\alpha+\beta-1-\alpha/p<\theta<\alpha+\beta-1}
L^{\frac{p\alpha}{\alpha-p(\alpha+\beta-1-\theta)}}([0,T];{\mathcal C}_b^{1+\theta}({\mathbb R}^d)),
\end{split}
\end{equation}
and, for every fixed $\theta_1\in (0,\alpha+\beta-1-\alpha/p)$ and $\theta_2\in (\alpha+\beta-1-\alpha/p,\alpha+\beta-1)$, there is a positive constant $C$ such that
\begin{equation}\label{2.47}
\|u_n\|_{\infty,1+\theta_1}+ \|\nabla u_n\|_{p\alpha/[\alpha-p(\alpha+\beta-1-\theta_2)],\theta_2}
\leq
 C\|f\|_{p,\beta}.
\end{equation}

Since $u_n$ satisfies \eqref{2.31},  relation \eqref{2.46} imlpies
\begin{equation}\label{2.48}
\partial_tu_n\in  \bigcap_{0\leq \theta <\alpha+\beta-1-\alpha/p}L^p([0,T];{\mathcal C}_b^\theta({\mathbb R}^d)) .
\end{equation}
Additionally, for every $\theta\in (0,\alpha+\beta-1-\alpha/p)$ there is a positive constant $C$,
depending on $d,T,\alpha$, $\beta,p,\theta,\lambda$ and $\|b\|_{p,\beta}$ such that
\begin{equation}\label{2.49}
\|\partial_tu_n\|_{p,\theta}\leq \|\Delta^{\frac{\alpha}{2}} u_n\|_{p,\theta}+\|\nabla u_n\|_{\infty,\theta}\|b_n\|_{p,\theta}+\lambda
\|u_n\|_{\infty,\theta}+ \|f\|_{p,\theta}\leq C\|f\|_{p,\beta}.
\end{equation}

Taking into account \eqref{2.47}, \eqref{2.49}, Morey's inequality (see \cite[Theorem 4, p.282]{Evans}) and Ascoli--Arzel\`{a}'s theorem, one can extract an (unlabelled) subsequence $u_n$  and a measurable function $u\in {\mathcal H}$ such that
\[
u_n(t,x)\rightarrow u(t,x), \ \ {\rm for every}  \ \ (t,x)\in [0,T]\times {\mathbb R}^d, \ \ {\rm as} \ \ n\rightarrow \infty.
\]
In addition, using \eqref{2.47}, \eqref{2.49} and Morey's inequality, one also has
\begin{equation*}
\begin{split}&\|\mathscr{I}_{-\gamma} u_n\|_{\mathcal{C}_b^{(1-\frac{1}{p})\wedge(\theta-\gamma)}([0,T]\times \mathbb{R}^d)}+\|\mathscr{I}_{-\gamma} u_n\|_{\infty,1}
\\ \leq&\;
C(d,T,\alpha,\beta,p,\lambda,\theta,\gamma,\|b\|_{p,\beta})\|f\|_{p,\beta}, \quad \forall \ 0<\gamma<\theta<\alpha+\beta-1-\frac{\alpha}{p}.
\end{split}
\end{equation*}
By interpolation inequalities for H\"{o}lder continuous functions and the Ascoli--Arzel\`{a} theorem,
it follows that
\[
\nabla u_n(t,x)\rightarrow \nabla u(t,x) \ \ {\rm (up \ to \ a \ unlabelled \ subsequence)},
\]
for every $(t,x)\in [0,T]\times {\mathbb R}^d$ as $n$ tends to infinity. Moreover, $\nabla u$ satisfies \eqref{2.20} in view of \eqref{2.34}. Estimates \eqref{2.39}, \eqref{2.47} and \eqref{2.49} also remain valid for $u$.

Finally, since each $u_n$ satisfies \eqref{2.31} and convergence \eqref{2.30} holds, the limiting function $u$ satisfies, for all $(t,x)\in [0,T]\times{\mathbb R}^d$, the following integral equation
\begin{equation}\label{2.50}
u(t,x)=\int_0^te^{-\lambda (t-s)}ds\int_{{\mathbb R}^d}K(t-s,x-y)[b(s,y)\cdot \nabla u(s,y)+f(s,y)]dy.
\end{equation}
Thus $u$ satisfies \eqref{2.1} (see \cite[Lemma 2.1]{TDW}).

\smallskip
$\bullet$ \textbf{(Uniqueness)} It is sufficient to show $u\equiv 0$ whenever $f$ vanishes since the equation is linear. Let $u$ be a strong solution of \eqref{2.1} with $f\equiv 0$. By \eqref{2.20} we get $\nabla u=0$, and then conclude $u\equiv 0$ by the integral representation \eqref{2.50}. \end{proof}

\begin{remark}\label{rem2.5}Let $q_1\in [1,\infty)$ and $\theta_1\in (0,1)$. If
\[
f\in L^{\infty}([0,T];\mathcal{C}_b(\mathbb{R}^d))\cap L^{q_1}([0,T];\mathcal{C}_b^{\theta_1}(\mathbb{R}^d)),
 \]
then
\[
f\in L^{q_2}([0,T];\mathcal{C}_b^{\theta_2}(\mathbb{R}^d)),
\]
for any $\theta_2\in  (0,\theta_1)$, where
$q_2=q_1\theta_1/\theta_2$. In this case, the following interpolation inequality holds
\begin{equation}\label{2.51}
[f]_{q_2,\theta_2} \leq 2^{\frac{\theta_1-\theta_2}{\theta_1}}\|f\|_{\infty,0}^{\frac{\theta_1-\theta_2}{\theta_1}}[f]_{q_1,\theta_1}^{\frac{\theta_2}{\theta_1}}.
\end{equation}
Let $u$ be the unique strong solution of the Cauchy problem \eqref{2.1} given in Theorem \ref{th2.4}. From \eqref{2.19}, we deduce that
\begin{equation}\label{2.52}
u\in \bigcap_{q\geq 1}
L^q([0,T];{\mathcal C}_b^{\alpha+\beta-\frac{\alpha}{p}}({\mathbb R}^d)), \quad  {\rm if} \ \ \alpha-1-\frac{\alpha}{p}\leq 0.
\end{equation}
Moreover, by combining \eqref{2.19} with \eqref{2.51}, for every $\theta\in (\alpha+\beta-1-\alpha/p,\alpha+\beta-1)$, we have
\begin{equation}\label{2.53}
[\nabla u]_{q,\alpha+\beta-1-\alpha/p}\leq 2^{\frac{\theta-\alpha-\beta+1+\alpha/p}{\theta}}\|\nabla u\|_{\infty,0}^{\frac{\theta-\alpha-\beta+1+\alpha/p}{\theta}}[\nabla u]_{p\alpha/[\alpha-p(\alpha+\beta-1-\theta)],\theta}^{\frac{\alpha+\beta-1-\alpha/p}{\theta}},
\end{equation}
where
\[
q=\frac{p\alpha\theta}{[\alpha-p(\alpha+\beta-1-\theta)](\alpha+\beta-1-\alpha/p)}>\frac{p\alpha}{
\alpha-p(\alpha+\beta-1-\theta)}.
\]
In view of \eqref{2.53}, along with \eqref{2.20}, \eqref{2.45} (valid also for $\nabla u$) and H\"{o}lder's inequality, it follows that for every $q>1$ there exist positive constants $\tilde{\varepsilon}=\tilde{\varepsilon}(p,\alpha,\beta,q)$ and $C=C(d,\alpha,\beta,p,q,\|b\|_{p,\beta})$ such that
\begin{equation}\label{2.54}
[\nabla u]_{q,\alpha+\beta-1-\alpha/p}\leq
C\lambda^{-\tilde{\varepsilon}}\|f\|_{p,\beta},
\end{equation}
for all large enough $\lambda$.
\end{remark}

\section{Proof of Theorem \ref{th1.1}}\label{sec3}\setcounter{equation}{0}
We carry this out into four parts: weak existence, uniqueness in law, pathwise uniqueness and Davie's type uniqueness. Note that if $b$ is bounded in the temporal variable, then it is $p$-integrable for every $p<\infty$. Throughout the following calculations, we always assume $p<\infty$.

\subsection{To prove Theorem \ref{th1.1} (i)}\label{sec3.1}\setcounter{equation}{0}
Let $\hat{\rho}$ be a nonnegative normalized mollifier in ${\mathbb R}^d$,
\[
0\leq \hat{\varrho} \in {\mathcal C}^\infty_0({\mathbb R}^d), \ \  {\rm supp}(\hat{\varrho})\subset B_1 \ \ {\rm and} \ \ \int_{{\mathbb R}^d}\hat{\varrho}(x)dx=1.
\]
For $n\in{\mathbb N}$, set $\hat{\varrho}_n(x)=n\hat{\varrho}(nx)$ and regularize $b$ in space by convolution with $\hat{\varrho}_n$,
\[
b^n(t,x)=(b(t,\cdot)\ast \hat{\varrho}_n)(x)=\int_{{\mathbb R}^d}b(t,x-y)\hat{\varrho}_n(y)dy.
\]
Then $b^n\in L^1([0,T];{\mathcal C}_b^1({\mathbb R}^d;{\mathbb R}^d))$ and
\begin{equation}\label{3.1}
\|b^n(t,\cdot)\|_0\leq \|b(t,\cdot)\|_0, \ a.e. \ t\in [0,T], \ \forall \ n\in {\mathbb N} \ \ \mbox{and} \ \
  \lim\limits_{n\rightarrow\infty}\|b^n-b\|_{1,0}=0.
\end{equation}

By the Cauchy--Lipschitz theorem, there is a unique
$\{\mathcal{F}_t\}_{t\in [0,T]}$-adapted, c\'{a}dl\`{a}g, $d$-dimensional process $\{X_t^n\}_{t\in [0,T]}$
defined on $(\Omega,\mathcal{F},\{\mathcal{F}_t\}_{t\in [0,T]},\mathbb{P})$ such that
\[
X_t^n=x+\int_0^tb^n(s,X^n_s)ds+L_t, \ \ {\mathbb P}-a.s..
\]
Define $\psi^n_t:=X^n_t-x-L_t$. Then it satisfies
\begin{equation}\label{3.2}
\psi^n_t=\int_0^tb^n(s,X_s^n)ds=\int_0^tb^n(s,x+\psi^n_s+L_s)ds.
\end{equation}

With the help of \eqref{3.1}, for every $0\leq t_1<t_2\leq T$,
\begin{equation}\label{3.3}
\sup_{n}\mathbb{E}\int_{t_1}^{t_2}|b^n(s,x+\psi^n_s+L_s)|ds\leq  \int_{t_1}^{t_2}\|b(s,\cdot)\|_0ds.
\end{equation}
Taking into account \eqref{3.2} and \eqref{3.3},  one concludes that
\begin{equation}\label{3.4}
\lim_{c\rightarrow \infty}\sup_n\sup_{0\leq t\leq T}\mathbb{P}\{|\psi_t^n|>c\}=0,
\end{equation}
and for every $\varsigma>0$,
\begin{equation}\label{3.5}
\lim_{\delta\downarrow 0}\sup_{n}
\sup_{0\leq t_1,t_2\leq T\atop{|t_1-t_2|\leq \delta}}\mathbb{P}\{|\psi_{t_1}^n-\psi_{t_2}^n|>\varsigma\}=0.
\end{equation}

From \eqref{3.4} and \eqref{3.5}, along with  Prohorov's theorem,
there exists a subsequence still denoted by itself such that
$\{(\psi_t^n,L_t)\}_{t\in [0,T]}$  converges weakly. Next, Skorohod's
representation theorem implies that there is a probability space
$(\tilde{\Omega},\tilde{\mathcal{F}},\{\tilde{\mathcal{F}}_t\}_{t\in [0,T]},\tilde{\mathbb{P}})$
and random processes $\{(\tilde{\psi}_t^n,\tilde{L}_t^n)\}_{t\in [0,T]}$, $\{(\tilde{\psi}_t,\tilde{L}_t)\}_{t\in [0,T]}$
on this probability space such that

\smallskip
$(1)$ the finite-dimensional distributions of $\{(\tilde{\psi}_t^n,\tilde{L}_t^n)\}_{t\in [0,T]}$
coincide with those of $\{(\psi_t^n,L_t)\}_{t\in [0,T]}$.

\smallskip
$(2)$ $(\tilde{\psi}_\cdot^n,\tilde{L}_\cdot^n)$  converges to $(\tilde{\psi}_\cdot,\tilde{L}_\cdot)$, \ $\tilde{\mathbb{P}}-a.s.$.

In particular, the process $\{\tilde{L}_t\}_{t\in [0,T]}$ remains a $d$-dimensional symmetric rotationally invariant $\alpha$-stable process and
\begin{equation}\label{3.6}
\tilde{\psi}_t^n=\int_0^tb^n(s,x+\tilde{\psi}_s^n+\tilde{L}_s^n)ds.
\end{equation}
Let $\tilde{\mathbb{E}}$ denote the expectation under the probability space $(\tilde{\Omega},\tilde{\mathcal{F}},\{\tilde{\mathcal{F}}_t\}_{t\in [0,T]},\tilde{\mathbb{P}})$. For $k\in{\mathbb N}$,
\begin{equation}\label{3.7}
\begin{split}&\tilde{\mathbb{E}}\int_0^T|b^n(s,x+\tilde{\psi}_s^n+\tilde{L}_s^n)-
b(s,x+\tilde{\psi}_s+\tilde{L}_s)|ds\\ \leq&\; \tilde{\mathbb{E}}\int_0^T|b^n(s,x+\tilde{\psi}_s^n+\tilde{L}_s^n)-
b^k(s,x+\tilde{\psi}_s^n+\tilde{L}_s^n)|ds
\\&+\tilde{\mathbb{E}}\int_0^T|b^k(s,x+\tilde{\psi}_s^n+\tilde{L}_s^n)-
b^k(s,x+\tilde{\psi}_s+\tilde{L}_s)|ds
\\&+\tilde{\mathbb{E}}\int_0^T|b^k(s,x+\tilde{\psi}_s+\tilde{L}_s)-
b(s,x+\tilde{\psi}_s+\tilde{L}_s)|ds
\\ \leq&\; C\Big[ \|b^n-b^k\|_{1,0}+
\|b^k-b\|_{1,0}\Big]+
\tilde{\mathbb{E}}\int_0^T|b^k(s,x+\tilde{\psi}_s^n+\tilde{L}_s^n)-
b^k(s,x+\tilde{\psi}_s+\tilde{L}_s)|ds.
\end{split}
\end{equation}

First letting $n\rightarrow \infty$, and then $k\rightarrow \infty$, relation \eqref{3.1} together with \eqref{3.7} yields
\[
\lim_{n\rightarrow \infty}\int_0^tb^n(s,x+\tilde{\psi}_s^n+\tilde{L}_s^n)ds=\int_0^tb(s,x+\tilde{\psi}_s+\tilde{L}_s)ds.
\]
Therefore,
\[
\tilde{\psi}_t=\int_0^tb(s,x+\tilde{\psi}_s+\tilde{L}_s)ds.
\]
Defining $\tilde{X}_t:=\psi_t+x+\tilde{L}_t$, the process $\{\tilde{X}_t\}_{t\in [0,T]}$ is
$\{\tilde{\mathcal{F}}_t\}_{t\in [0,T]}$-adapted, c\'{a}dl\`{a}g and satisfies \eqref{1.14}. Thus there is a weak solution to \eqref{1.1}.

\subsection{To prove Theorem \ref{th1.1} (ii)}\label{sec3.2}
Let $f\in {\mathcal C}_0^\infty([0,T]\times {\mathbb R}^d)$ and consider the following backward Cauchy problem
\begin{equation}\label{3.8}
\left\{\begin{array}{ll}
\partial_{t}u(t,x)+\Delta^{\frac{\alpha}{2}}u(t,x)+b(t,x)\cdot \nabla u(t,x)=f(t,x), \ (t,x)\in [0,T)\times {\mathbb R}^d, \\ [0.1cm]
u(T,x)=0, \  x\in {\mathbb R}^d.
\end{array}\right.
\end{equation}
By Theorem \ref{th2.4}, there is a unique strong solution $u$ to \eqref{3.8}.  Moreover, if $\alpha-1-\alpha/p\leq 0$, then $u\in {\mathcal H}$; otherwise $u$ belongs to the space given in \eqref{2.21}.

Let $\tilde{\varrho}$ be a nonnegative normalized mollifier in ${\mathbb R}^{d+1}$ with
\[
{\rm supp}\tilde{\varrho}\subset \{(t,x) \ | -1\leq t\leq 0, \, |x|\leq 1\}.
\]
For $k\in {\mathbb N}$, set $\tilde{\varrho}_k(t,x)=k^{d+1}
\tilde{\varrho}(kt,kx)$ and define
\[
\tilde{b}_k(t,x)=b\ast \tilde{\varrho}_k(t,x), \ \tilde{f}_k(t,x)=f\ast \tilde{\varrho}_k(t,x) \  {\rm and} \ \tilde{u}_k(t,x)=u\ast \tilde{\varrho}_k(t,x).
\]
Then,
\begin{equation}\label{3.9}
\left\{
  \begin{array}{ll}
 \lim\limits_{k\rightarrow\infty}\tilde{u}_k(t,x)=u(t,x),  \ \ \forall \ (t,x)\in [0,T]\times{\mathbb R}^d,\\ [0.2cm]
   \lim\limits_{k\rightarrow\infty}\nabla \tilde{u}_k(t,x)=\nabla u(t,x),  \ \ \forall \ (t,x)\in [0,T]\times{\mathbb R}^d, \\ [0.2cm]
 \lim\limits_{k\rightarrow\infty}\tilde{b}_k(t,x)=b(t,x), \ \ a.e. \ t\in [0,T], \ \forall \ x\in {\mathbb R}^d, \\ [0.2cm]
 \lim\limits_{k\rightarrow\infty}\tilde{f}_k(t,x)=f(t,x), \ \ a.e. \ t\in [0,T], \ \forall \ x\in {\mathbb R}^d,
  \end{array}
\right.
\end{equation}
and
\begin{equation}\label{3.10}
\partial_t\tilde{u}_k(t,x)+\Delta^{\frac{\alpha}{2}}\tilde{u}_k(t,x)+(b\cdot \nabla u)_k(t,x)=-\tilde{f}_k(t,x), \ (t,x)\in [0,T-1/k)\times {\mathbb R}^d.
\end{equation}

Let $X_t$ be a weak solution of \eqref{1.1}. Applying It\^{o}'s formula  (see \cite[Theorem 4.4.10]{App}) to $\tilde{u}_k(t,X_t)$, and then letting $k$ tend to infinity, leads to the limiting equation
\begin{equation}\label{3.11}
\begin{split}&u(t,X_t)-u(0,x)=-\int_0^tf(s,X_s)ds \\ &\qquad + \int_0^t\int_{{\mathbb R}^d\setminus \{0\}}[u(s,X_{s-}+z)-u(s,X_{s-})] \tilde{N}(ds,dz),  \ \forall \ t\in [0,T],
\end{split}
\end{equation}
where \eqref{3.9} and \eqref{3.10} have been employed in the passage to the limit.

Taking $t=T$ in \eqref{3.11} and then expectations yields
\begin{equation}\label{3.12}
u(0,x)={\mathbb E}\int_0^Tf(s,X_s)ds.
\end{equation}
Since the left-hand side of \eqref{3.12} is independent of $X_t$,  uniqueness in law follows.

\subsection{To prove Theorem \ref{th1.1} (iii)}\label{sec3.3}
 Let us consider the following vector-valued backward Cauchy problem for large $\lambda>0$
\begin{equation}\label{3.13}
\left\{\begin{array}{ll}
\partial_{t}U(t,x)+\Delta^{\frac{\alpha}{2}}U(t,x)+b(t,x)\cdot \nabla U(t,x)-\lambda U(t,x)\\ \qquad\qquad\qquad =-b(t,x), \ (t,x)\in [0,T)\times {\mathbb R}^d, \\
U(T,x)=0, \  x\in {\mathbb R}^d.
\end{array}\right.
\end{equation}
In view of Theorem \ref{th2.4}, there exists a unique strong solution $U$ to \eqref{3.13}.  Moreover, if $\alpha-1-\alpha/p\leq 0$, then $U\in {\mathcal H}^d$; otherwise
\begin{equation}\label{3.14}
\left\{
\begin{array}{ll}
\!\!U\in \Big[L^p([0,T];{\mathcal C}_b^{\alpha+\beta}({\mathbb R}^d))\bigcap L^\infty([0,T];{\mathcal C}_b^{\alpha+\beta-\frac{\alpha}{p}}({\mathbb R}^d))\Big]^d, \  {\rm if} \ \alpha+\beta<2 \ {\rm or} \ \alpha+\beta-\frac{\alpha}{p}>2, \\ \!\! U\in \Big[\bigcap\limits_{0<\theta <1} L^p([0,T];{\mathcal C}_b^{2-\theta}({\mathbb R}^d))\bigcap L^\infty([0,T];{\mathcal C}_b^{\alpha+\beta-\frac{\alpha}{p}}({\mathbb R}^d))\Big]^d, \ \  {\rm if}  \ \alpha+\beta=2, \\ \!\! U\in \Big[ L^p([0,T];{\mathcal C}_b^{\alpha+\beta}({\mathbb R}^d)) \bigcap \bigcap\limits_{0<\theta <1} L^\infty([0,T];{\mathcal C}_b^{2-\theta}({\mathbb R}^d))\Big]^d, \ \ {\rm if}  \ \alpha+\beta-\frac{\alpha}{p}=2.
 \end{array}
\right.
\end{equation}
Furthermore, in both cases there exist positive constants $\varepsilon=\varepsilon(p,\alpha,\beta)$ and $C=C(d,\alpha,\beta,p,\|b\|_{p,\beta})$ such that
\begin{equation}\label{3.15}
\sup_{(t,x)\in [0,T]\times {\mathbb R}^d}|\nabla U(t,x)|\leq C\lambda^{-\varepsilon}\leq \frac{1}{2},
\end{equation}
where the last inequality holds for sufficiently large $\lambda$.

Let $\tilde{\varrho}_k$ and $\tilde{b}_k$ be defined in Sec. \ref{sec3.2}, and set $\tilde{U}_k(t,x)=U\ast \tilde{\varrho}_k(t,x)$. Applying It\^{o}'s formula to $\tilde{U}_k(t,X_t)$, and then letting $k$ tends to infinity, yields an analogue of \eqref{3.11}
\begin{equation*}
\begin{split}&U(t,X_t)-U(0,x)
 \\   =&\;\lambda\int_0^tU(s,X_s)ds -\int_0^tb(s,X_s)ds + \int_0^t\int_{{\mathbb R}^d\setminus \{0\}}[U(s,X_{s-}+z)-U(s,X_{s-})] \tilde{N}(ds,dz)  \\ =&\;x+L_t-X_t +\lambda\int_0^tU(s,X_s)ds + \int_0^t\int_{{\mathbb R}^d\setminus \{0\}}[U(s,X_{s-}+z)-U(s,X_{s-})] \tilde{N}(ds,dz),
\end{split}
\end{equation*}
where the second identity uses the fact that $X$ satisfies \eqref{1.1}. Thus,
\begin{equation}\label{3.16}
\begin{split}
X_t=&-U(t,X_t)+U(0,x)+x+L_t+\lambda\int_0^tU(s,X_s)ds
 \\ & + \int_0^t\int_{{\mathbb R}^d\setminus \{0\}}[U(s,X_{s-}+z)-U(s,X_{s-})] \tilde{N}(ds,dz).
\end{split}
\end{equation}

Let $X_t(x)$ and $Y_t(y)$ be two weak solutions of \eqref{1.1} starting from $x$ and $y$, respectively, defined on the same probability space. From \eqref{3.15} and \eqref{3.16}, it follows that
\begin{equation}\label{3.17}
\begin{split}&|X_t(x)-Y_t(y)|
%\\ =&\;\bigg|U(t,Y_t)-U(t,X_t)+U(0,x)-U(0,y)+x-y+\lambda\int_0^t[U(s,X_s)-U(s,Y_s)]ds \\ &+ \int_0^t\int_{{\mathbb R}^d\setminus \{0\}}[U(s,X_{s-}+z)-U(s,X_{s-}) -U(s,Y_{s-}+z)+U(s,Y_{s-})] \tilde{N}(ds,dz)\bigg|
\\ \leq&\; \|\nabla U\|_{\infty,0}\bigg[|X_t-Y_t|+|x-y|+\lambda\int_0^t|X_s-Y_s|ds\bigg] +|x-y|  \\ & + \bigg|\int_0^t\int_{{\mathbb R}^d\setminus \{0\}}[U(s,X_{s-}+z)-U(s,X_{s-}) -U(s,Y_{s-}+z)+U(s,Y_{s-})] \tilde{N}(ds,dz)\bigg| \\ \leq&\;  \frac{1}{2}|X_t-Y_t|+\frac{3}{2}|x-y|+\frac{\lambda}{2}\int_0^t|X_s-Y_s|ds  + \bigg|\int_0^t\int_{{\mathbb R}^d\setminus \{0\} }[U(s,X_{s-}+z)  \\ & -U(s,X_{s-})-U(s,Y_{s-}+z) +U(s,Y_{s-})] \tilde{N}(ds,dz)\bigg|.
\end{split}
\end{equation}

By Kunita's first inequality (see \cite[Theorem 4.4.23]{App}), inequality \eqref{3.17} implies
\begin{equation}\label{3.18}
\begin{split}&{\mathbb E}\sup_{0\leq r\leq t}|X_r(x)-Y_r(y)|^2 \\ \leq&\; C|x-y|^2+C\int_0^t{\mathbb E}\sup_{0\leq r\leq s}|X_r-Y_r|^2ds \\ & + C{\mathbb E}\int_0^tds\int_{{\mathbb R}^d\setminus \{0\}}|U(s,X_{s-}+z)-U(s,X_{s-})-U(s,Y_{s-}+z)+U(s,Y_{s-})|^2\nu(dz)
 \\ =&\; C|x-y|^2+C\int_0^t{\mathbb E}\sup_{0\leq r\leq s}|X_r-Y_r|^2ds \\ & + C{\mathbb E}\int_0^tds\int_{{\mathbb R}^d\setminus \{0\}}|U(s,X_s+z)-U(s,X_s)-U(s,Y_s+z)+U(s,Y_s)|^2\nu(dz)
% \\ &\leq&\; C|x-y|^2+C\int_0^t{\mathbb E}\sup_{0\leq r\leq s}|X_r-Y_r|^2ds \\ &&+4C\|\nabla U\|_{\infty,0}^2\int_0^t{\mathbb E}\sup_{0\leq r\leq s}|X_r-Y_r|^2ds\int_{|z|\geq 1}\nu(dz)  \\ && +C {\mathbb E}\int_0^tds\int_{0<|z|<1}|U(s,X_s+z)-U(s,X_s)-U(s,Y_s+z)+U(s,Y_s)|^2\nu(dz)
 \\ \leq&\; C|x-y|^2+C\int_0^t{\mathbb E}\sup_{0\leq r\leq s}|X_r-Y_r|^2ds  \\ &+ C{\mathbb E}\int_0^tds\int_{0<|z|< 1}|U(s,X_s+z)-U(s,X_s)-U(s,Y_s+z)+U(s,Y_s)|^2\nu(dz),
\end{split}
\end{equation}
where in the fifth line we have used the fact $\{X_t\}_{t\in [0,T]}$ and $\{Y_t\}_{t\in [0,T]}$ are L\'{e}vy processes and every L\'{e}vy process has at most countably discontinuities (see \cite[Theorems 2.1.8 and 2.9.2]{App}).

\smallskip
\textbf{Case 1:} $\alpha-1-\alpha/p\leq 0$.  Notice that
\begin{equation}\label{3.19}
 U \in
\bigg[\bigcap_{\alpha+\beta-1-\alpha/p<\theta<\alpha+\beta-1}
 L^{\frac{p\alpha}{\alpha-p(\alpha+\beta-1-\theta)}}([0,T];{\mathcal C}_b^{1+\theta}({\mathbb R}^d))\bigg]^d.
\end{equation}
Hence,
\begin{equation}\label{3.20}
\begin{split}&|U(s,X_s+z)-U(s,X_s)-U(s,Y_s+z)+U(s,Y_s)|  \\ \leq&\;\sum_{i,j=1}^d\sup_{0\leq \tau \leq 1}|\partial_{x_i} U_j(s,\tau X_s+(1-\tau)Y_s+z)-\partial_{x_i} U_j(s,\tau X_s+(1-\tau) Y_s)||X_s-Y_s| \\ \leq&\;
d^2[\nabla U(s,\cdot)]_{\alpha+\beta-1-\epsilon\alpha/p}|z|^{\alpha+\beta-1-\frac{\epsilon\alpha}{p}}
|X_s-Y_s|,
\end{split}
\end{equation}
where $0<\epsilon<[p(\alpha/2+\beta-1)/\alpha]\wedge 1$.

In view of \eqref{3.18} and \eqref{3.20}, we arrive at
\begin{equation}\label{3.21}
\begin{split}&{\mathbb E}\sup_{0\leq r\leq t}|X_r(x)-Y_r(y)|^2-C|x-y|^2-C\int_0^t{\mathbb E}\sup_{0\leq r\leq s}|X_r-Y_r|^2ds  \\ \leq&\; C \int_0^t[\nabla U(s,\cdot)]_{\alpha+\beta-1-\epsilon\alpha/p}^2{\mathbb E}\sup_{0\leq r\leq s}|X_r-Y_r|^2ds
\int_{0<|z|<1}|z|^{\alpha+2\beta-2-\frac{2\epsilon\alpha}{p}-d}dz
% \\ &\leq&\; C \int_0^t[\nabla U(s,\cdot)]_{\alpha+\beta-1-\epsilon\alpha/p}^2{\mathbb E}\sup_{0\leq r\leq s}|X_r-Y_r|^2ds\int_0^1\tau^{\alpha+2\beta-3-2\epsilon\alpha/p} d\tau
  \\ \leq&\; C \int_0^t[\nabla U(s,\cdot)]_{\alpha+\beta-1-\epsilon\alpha/p}^2{\mathbb E}\sup_{0\leq r\leq s}|X_r-Y_r|^2ds,
\end{split}
\end{equation}
where in the last inequality we used $1-\alpha/2<\beta$ and $0<\epsilon<p(\alpha/2+\beta-1)/\alpha$.

Moreover,
\[
\frac{p\alpha}{\alpha-p(\alpha+\beta-1-\theta)}\Big|_{\theta=\alpha+\beta-1-\frac{\epsilon\alpha}{p}}=\frac{p}{1-\epsilon},
\]
Hence, from \eqref{3.19},
\begin{equation}\label{3.22}
[\nabla U(s,\cdot)]_{\alpha+\beta-1-\epsilon\alpha/p} \in
L^{\frac{p}{1-\epsilon}}([0,T]).
\end{equation}
If $p\geq 2$, then
\begin{equation}\label{3.23}
[\nabla U(s,\cdot)]_{\alpha+\beta-1-\epsilon\alpha/p}^2 \in  L^1([0,T]).
\end{equation}
Otherwise, choosing $\epsilon=1-p/2<[p(\alpha/2+\beta-1)/\alpha]\wedge 1$ in \eqref{3.22}, also leads to \eqref{3.23}.

Combining \eqref{3.21}, \eqref{3.23} and Gr\"{o}nwall's inequality gives
\begin{equation}\label{3.24}
{\mathbb E}\sup_{0\leq r\leq T}|X_r(x)-Y_r(y)|^2\leq C|x-y|^2,
\end{equation}
which establishes pathwise uniqueness.

\smallskip
\textbf{Case 2:} $\alpha-1-\alpha/p>0$. By \eqref{3.14}, an analogue of \eqref{3.20} holds
\begin{equation*}
\begin{split}&|U(s,X_s+z)-U(s,X_s)-U(s,Y_s+z)+U(s,Y_s)|  \\  \leq&\; \left\{
\begin{array}{ll}
d^2[\nabla U(s,\cdot)]_{\alpha+\beta-1}|z|^{\alpha+\beta-1}
|X_s-Y_s|, & {\rm if} \ \alpha+\beta< 2, \\ [0.2cm] d^2[\nabla U(s,\cdot)]_{\alpha+\beta-1-\epsilon}|z|^{1-\epsilon}
|X_s-Y_s|, \ & {\rm if}  \ \alpha+\beta=2,
\\  [0.2cm] d^2\|\nabla^2 U(s,\cdot)\|_0|z||X_s-Y_s|,\ & {\rm if}  \ \alpha+\beta>2,
 \end{array}
\right.
\end{split}
\end{equation*}
where $\epsilon$ can take any values in $(0,1)$. Consequently, an analogue of \eqref{3.21} is obtained
\begin{equation}\label{3.25}
\begin{split}
&{\mathbb E}\sup_{0\leq r\leq t}|X_r(x)-Y_r(y)|^2-C|x-y|^2-C\int_0^t{\mathbb E}\sup_{0\leq r\leq s}|X_r-Y_r|^2ds
 \\ \leq&\;
C\left\{\begin{array}{ll}
 \int_0^t[\nabla U(s,\cdot)]_{\alpha+\beta-1}^2{\mathbb E}\sup\limits_{0\leq r\leq s}|X_r-Y_r|^2ds
\int_0^1\tau^{\alpha+2\beta-3}d\tau, & {\rm if} \ \alpha+\beta< 2, \\ [0.2cm] \int_0^t[\nabla U(s,\cdot)]_{\alpha+\beta-1-\epsilon}^2{\mathbb E}\sup\limits_{0\leq r\leq s}|X_r-Y_r|^2ds
\int_0^1\tau^{\beta-2\epsilon-1}d\tau, \ & {\rm if}  \ \alpha+\beta=2,
\\ [0.2cm] \int_0^t\|\nabla^2 U(s,\cdot)\|_0^2{\mathbb E}\sup\limits_{0\leq r\leq s}|X_r-Y_r|^2ds
\int_0^1\tau^{1-\alpha}d\tau,\ & {\rm if}  \ \alpha+\beta>2,
 \end{array}
\right.
 \\  \leq&\;
C\left\{\begin{array}{ll}
 \int_0^t[\nabla U(s,\cdot)]_{\alpha+\beta-1}^2{\mathbb E}\sup\limits_{0\leq r\leq s}|X_r-Y_r|^2ds, & {\rm if} \ \alpha+\beta< 2, \\ [0.2cm] \int_0^t[\nabla U(s,\cdot)]_{\alpha+\beta-1-\epsilon}^2{\mathbb E}\sup\limits_{0\leq r\leq s}|X_r-Y_r|^2ds, \ & {\rm if}  \ \alpha+\beta=2,
 \\ [0.2cm] \int_0^t\|\nabla^2 U(s,\cdot)\|_0^2{\mathbb E}\sup\limits_{0\leq r\leq s}|X_r-Y_r|^2ds,\ & {\rm if}  \ \alpha+\beta>2,
 \end{array}
\right.
\end{split}
\end{equation}
where in the fifth line of \eqref{3.25} we used $\beta>1-\alpha/2$ and in the sixth line we have chosen $\epsilon<\beta/2$.

Since $\alpha-1-\alpha/p>0$, we have $\alpha>1$ and $p>\alpha/(\alpha-1)>2$. Therefore, the functions
\[
[\nabla U(s,\cdot)]_{\alpha+\beta-1}^2, \ \ [\nabla U(s,\cdot)]_{\alpha+\beta-1-\epsilon}^2 \ \ {\rm and} \ \ \|\nabla U(s,\cdot)\|_0^2
\]
are Lebesgue integrable on $[0,T]$. With the aid of Gr\"{o}nwall's inequality, \eqref{3.24} remains valid that proves pathwise uniqueness.

\subsection{To prove Theorem \ref{th1.1} (iv)}\label{sec3.4}
Now, we verify the Davie type uniqueness for \eqref{1.1}. For every $0\leq \tau\leq t\leq T$, consider the following SDE
\begin{equation}\label{3.26}
dX_{\tau,t}(x)=b(t,X_{\tau,t}(x))dt+dL_t, \quad 0\leq \tau \leq t\leq T, \quad X_{\tau,\tau}(x)=x\in {\mathbb R}^d.
\end{equation}
There exists a unique strong solution to \eqref{3.26}. Let $X_{\tau,t}(x)$ and $Y_{\tau,t}(y)$ be two solutions of \eqref{3.26} starting from $x$ and $y$, respectively. From \eqref{3.16},
\begin{equation}\label{3.27}
\begin{split}X_{\tau,t}(x)-Y_{\tau,t}(y) =&-U(t,X_{\tau,t})+U(t,Y_{\tau,t})+U(\tau,x)-U(\tau,y)+x-y \\&+\lambda\int_\tau^t
[U(s,X_{\tau,s})-U(s,Y_{\tau,s})]ds
 + \int_\tau^t\int_{{\mathbb R}^d\setminus \{0\}}[U(s,X_{\tau,s-}+z) \\ & -U(s,X_{\tau,s-})-U(s,Y_{\tau,s-}+z)+U(s,Y_{\tau,s-})]  \tilde{N}(ds,dz).
\end{split}
\end{equation}

Fix $\hat{q}\geq 2$ (to be specified later). By \eqref{3.27} and Kunita's inequality (see \cite[Theorem 4.4.23]{App}), we obtain
\begin{equation*}\label{3.28}
\begin{split}&{\mathbb E}\sup_{\tau\leq r\leq t}|X_{\tau,r}(x)-Y_{\tau,r}(y)|^{\hat{q}} \\ \leq&\; C|x-y|^{\hat{q}}+C\int_\tau^t{\mathbb E}\sup_{\tau\leq r\leq s}|X_{\tau,r}-Y_{\tau,r}|^{\hat{q}}ds
+C{\mathbb E}\bigg(\int_\tau^tds\int_{|z|\geq1}|X_{\tau,s}-Y_{\tau,s}|^2\nu(dz)\bigg)^{\frac{\hat{q}}{2}}  \\ & +C{\mathbb E}\int_\tau^tds\int_{|z|\geq 1}|X_{\tau,s}-Y_{\tau,s}|^{\hat{q}}\nu(dz) +  C{\mathbb E}\bigg(\int_\tau^tds\int_{0<|z|<1}|U(s,X_{\tau,s}+z) \\ &-U(s,X_{\tau,s})-U(s,Y_{\tau,s}+z)+U(s,Y_{\tau,s})|^2
\nu(dz)\bigg)^{\frac{\hat{q}}{2}}  \\ & +C{\mathbb E}\int_\tau^tds\int_{0<|z|<1}|U(s,X_{\tau,s}+z)-U(s,X_{\tau,s})-U(s,Y_{\tau,s}+z)+
U(s,Y_{\tau,s})|^{\hat{q}}\nu(dz).
\end{split}
\end{equation*}
If one uses H\"{o}lder's inequality, then
\begin{equation}\label{3.28}
\begin{split} &{\mathbb E}\sup_{\tau\leq r\leq t}|X_{\tau,r}(x)-Y_{\tau,r}(y)|^{\hat{q}} \\
\leq&\;
C|x-y|^{\hat{q}}+C\int_\tau^t{\mathbb E}\sup_{\tau\leq r\leq s}|X_{\tau,r}-Y_{\tau,r}|^{\hat{q}}ds +  C{\mathbb E}\bigg(\int_\tau^tds\int_{0<|z|<1}|U(s,X_{\tau,s}+z) \\ &-U(s,X_{\tau,s})-U(s,Y_{\tau,s}+z)+
U(s,Y_{\tau,s})|^2\nu(dz)\bigg)^{\frac{\hat{q}}{2}}  \\ & + C{\mathbb E}\int_\tau^tds\int_{0<|z|< 1}|U(s,X_{\tau,s}+z)-U(s,X_{\tau,s})-U(s,Y_{\tau,s}+z)+U(s,Y_{\tau,s}) |^{\hat{q}}\nu(dz)
 \\ =:&\; C|x-y|^{\hat{q}}+C\int_\tau^t{\mathbb E}\sup_{\tau\leq r\leq s}|X_{\tau,r}-Y_{\tau,r}|^{\hat{q}}ds + J_1(\tau,t)+J_2(\tau,t),
\end{split}
\end{equation}
where the constant $C$ depends only on $d,\alpha,\beta,p,\hat{q}$ and $\|b\|_{p,\beta}$.

\medskip
\textbf{Case 1:} $\alpha-1-\alpha/p\leq 0$.  Let $0<\epsilon<[p(\alpha/2+\beta-1)/\alpha]\wedge 1$. From \eqref{3.19} and \eqref{3.20}, it follows that
\begin{equation}\label{3.29}
\begin{split}J_1(\tau,t)\leq&\; C(d,\alpha,\beta,\epsilon,p,\hat{q},\|b\|_{p,\beta}) {\mathbb E}\bigg( \int_\tau^t[\nabla U(s,\cdot)]_{\alpha+\beta-1-\epsilon\alpha/p}^2
|X_{\tau,s}-Y_{\tau,s}|^2ds \\
&\times \int_{0<|z|<1}|z|^{\alpha+2\beta-2-\frac{2\epsilon\alpha}{p}-d}dz\bigg)^{\frac{\hat{q}}{2}}
 \\
\leq&\; C(d,\alpha,\beta,\epsilon,p,\hat{q},\|b\|_{p,\beta})\bigg(\int_0^T[\nabla U(s,\cdot)]_{\alpha+\beta-1-\epsilon\alpha/p}^2ds\bigg)^{\frac{\hat{q}}{2}} \\
&\times {\mathbb E}\sup_{\tau\leq r\leq t}|X_{\tau,r}(x)-Y_{\tau,r}(y)|^{\hat{q}}.
\end{split}
\end{equation}
Moreover, by taking $\theta=\alpha+\beta-1-\epsilon\alpha/(2p)$ in \eqref{3.19}, it turns out that
\begin{equation}\label{3.30}
[\nabla U(s,\cdot)]_{\alpha+\beta-1-\epsilon\alpha/(2p)} \in
L^{\frac{2p}{2-\epsilon}}([0,T]).
\end{equation}

If $p\geq 2$, then by \eqref{3.23} and \eqref{3.30}, both $[\nabla U(s,\cdot)]_{\alpha+\beta-1-\epsilon\alpha/p}$ and $[\nabla U(s,\cdot)]_{\alpha+\beta-1-\epsilon\alpha/(2p)}$ belong to $L^2([0,T])$. Furthermore, in view of H\"{o}lder's inequality, \eqref{2.51}, \eqref{3.15} and \eqref{3.30}, we infer that
\begin{equation}\label{3.31}
\begin{split}[\nabla U]_{2,\alpha+\beta-1-\epsilon\alpha/p} \leq&\;C(T)[\nabla U]_{(2\alpha+2\beta-2-\epsilon\alpha/p)/(\alpha+\beta-1-\epsilon\alpha/p),\alpha+\beta-1-\epsilon\alpha/p}
 \\[6pt] \leq&\;
C(T)[\nabla U]_{2,\alpha+\beta-1-\epsilon\alpha/(2p)}^{\frac{\alpha+\beta-1-\epsilon\alpha/p}{\alpha+\beta-1-\epsilon\alpha/(2p)}} \|\nabla U\|_{\infty,0}^{\frac{\epsilon\alpha/(2p)}{\alpha+\beta-1-\epsilon\alpha/(2p)}}
 \\[6pt]  \leq&\;C(T)[\nabla U]_{2p/(2-\epsilon),\alpha+\beta-1-\epsilon\alpha/(2p)}^{\frac{\alpha+\beta-1-\epsilon\alpha/p}{\alpha+\beta-1-\epsilon\alpha/(2p)}} \|\nabla U\|_{\infty,0}^{\frac{\epsilon\alpha/(2p)}{\alpha+\beta-1-\epsilon\alpha/(2p)}}
 \\[6pt]
\leq&\; C(d,T,\alpha,\beta,\epsilon,p,\|b\|_{p,\beta}) \lambda^{\frac{-\epsilon\alpha \varepsilon/(2p)}{\alpha+\beta-1-\epsilon\alpha/(2p)}}.
\end{split}
\end{equation}
This, in conjunction with \eqref{3.29}, indicates
\begin{equation}\label{3.32}
J_1(\tau,t)\leq C(d,T,\alpha,\beta,\epsilon,p,\hat{q},\|b\|_{p,\beta})
\lambda^{\frac{-\epsilon\alpha \varepsilon\hat{q} /(2p)}{\alpha+\beta-1-\epsilon\alpha/(2p)}} {\mathbb E}\sup_{\tau\leq r\leq t}|X_{\tau,r}(x)-Y_{\tau,r}(y)|^{\hat{q}}.
\end{equation}
Finally, for fixed $\hat{q}$ and $\epsilon$, choosing $\lambda$ large enough ensures that
\begin{equation}\label{3.33}
J_1(\tau,t)\leq \frac{1}{2} {\mathbb E}\sup_{\tau\leq r\leq t}|X_{\tau,r}(x)-Y_{\tau,r}(y)|^{\hat{q}}.
\end{equation}

If $1<p<2$, choose
\[
\epsilon=1-\frac{p}{2}-\hat{\theta}, \ \
0<\hat{\theta}<\Big[\frac{p(\alpha/2+\beta-1)}{\alpha}-1+\frac{p}{2}\Big]\wedge\Big(1-\frac{p}{2}\Big),
\]
which guarantees that
\[
0<\epsilon<\Big[\frac{p(\alpha/2+\beta-1)}{\alpha}\Big]\wedge 1 \quad {\rm for}
\ \ p>\frac{\alpha}{\alpha+\beta-1}.
\]
From \eqref{3.22}, we derive
\begin{equation}\label{3.34}
[\nabla U(s,\cdot)]_{\alpha+\beta-1-\epsilon\alpha/p}=[\nabla U(s,\cdot)]_{3\alpha/2+\beta-1-\alpha/p+\alpha\hat{\theta}/p}\in
L^{\frac{2p}{p+2\hat{\theta}}}([0,T]).
\end{equation}
Since $U$ is the unique strong solution of the Cauchy problem \eqref{3.13}, with the help of Remark \ref{rem2.2} (see \eqref{2.52} and \eqref{2.54}),
\begin{equation}\label{3.35}
U\in \bigg[\bigcap_{q\geq 1}
L^{2q}([0,T];{\mathcal C}_b^{\alpha+\beta-\frac{\alpha}{p}}({\mathbb R}^d))\bigg]^d,
\end{equation}
and there are positive constants $\tilde{\varepsilon}=\tilde{\varepsilon}(p,\alpha,\beta,q)$ and $C=C(d,\alpha,\beta,p,q,\|b\|_{p,\beta})$ such that for every large enough $\lambda$
\begin{equation}\label{3.36}
[\nabla U]_{2q,\alpha+\beta-1-\alpha/p}\leq
C\lambda^{-\tilde{\varepsilon}}.
\end{equation}

Fix $q>1$ and set
\[
\eta=\frac{p(q-1)}{q(p+2\hat{\theta})-p}\in(0,1).
\]
Then $\eta(1/2+\hat{\theta}/p)<1/p$ for $\hat{\theta}<1-p/2$.
By the interpolation inequality,
\[
[\nabla U]_{2,\alpha+\beta-1-\alpha/p+\eta\alpha(1/2+\hat{\theta}/p)}\leq [\nabla U]_{2p/(p+2\hat{\theta}),\alpha+\beta-1-\alpha/p+\alpha(1/2+\hat{\theta}/p)}^\eta [\nabla U]_{2q,\alpha+\beta-1-\alpha/p}^{1-\eta},
\]
together with \eqref{3.34}--\eqref{3.36}, it follows that
\begin{equation}\label{3.37}
[\nabla U(s,\cdot)]_{\alpha+\beta-1-\alpha/p+\eta\alpha(1/2+\hat{\theta}/p)} \in  L^2([0,T])
\end{equation}
and moreover,
\begin{equation}\label{3.38}
[\nabla U]_{2,\alpha+\beta-1-\alpha/p+\eta\alpha(1/2+\hat{\theta}/p)}
\leq C(d,\alpha,\beta,\hat{\theta},p,q,\|b\|_{p,\beta}) \lambda^{-\tilde{\varepsilon}(1-\eta)}.
\end{equation}
Now take $\epsilon=1-p\eta/2-\eta\hat{\theta}$, so that
\[
\alpha+\beta-1-\frac{\epsilon\alpha}{p}=\alpha+\beta-1-\frac{\alpha}{p}+\eta\alpha\Big(\frac{1}{2}+\frac{\hat{\theta}}{p}\Big).
\]
Since $\eta \rightarrow p/(p+2\hat{\theta})$ as $q\rightarrow \infty$, and given that $p>\alpha/(\alpha+\beta-1)$, one can choose $q$ big enough to ensure
 $\epsilon<[p(\alpha/2+\beta-1)/\alpha]\wedge 1$. Therefore, we get \eqref{3.33} from \eqref{3.29} and \eqref{3.38} for large enough $\lambda$.

On the other hand, for all $p\in (1,\infty)$,  $J_2$ can be estimated as
\begin{equation}\label{3.39}
\begin{split}&J_2(\tau,t) \\ \leq&\; C {\mathbb E} \int_\tau^t[\nabla U(s,\cdot)]_{\alpha+\beta-1-\alpha/p}^{\hat{q}}
|X_{\tau,s}-Y_{\tau,s}|^{\hat{q}}ds\int_{0<|z|<1}|z|^{(\alpha+\beta-1)\hat{q}-\frac{\alpha\hat{q}}{p}-d-\alpha}dz % \\ &\leq&\; C {\mathbb E} \int_\tau^t[\nabla U(s,\cdot)]_{\alpha+\beta-1-\alpha/p}^{\hat{q}}|X_{\tau,s}-Y_{\tau,s}|^{\hat{q}}ds\int_0^1 r^{(\alpha+\beta-1-\alpha/p)\hat{q}-1-\alpha}dr.
\end{split}
\end{equation}
Taking $\hat{q}>\alpha/(\alpha+\beta-1-\alpha/p)$, we get $(\alpha+\beta-1-\alpha/p)\hat{q}-1-\alpha>-1$. Whence,
\begin{equation}\label{3.40}
J_2(\tau,t)\leq C {\mathbb E} \int_\tau^t[\nabla U(s,\cdot)]_{\alpha+\beta-1-\alpha/p}^{\hat{q}}
|X_{\tau,s}-Y_{\tau,s}|^{\hat{q}}ds.
\end{equation}

In conjunction with \eqref{3.28}, \eqref{3.33} and \eqref{3.40}, for every fixed $\hat{q}>\alpha/(\alpha+\beta-1-\alpha/p)$, and for $\lambda$ big enough, one has
\begin{equation*}
\begin{split}&{\mathbb E}\sup_{\tau\leq r\leq t}|X_{\tau,r}(x)-Y_{\tau,r}(y)|^{\hat{q}}
 \\ \leq&\; C|x-y|^{\hat{q}}+C  \int_\tau^t\Big[1+[\nabla U(s,\cdot)]_{\alpha+\beta-1-\alpha/p}^{\hat{q}}\Big]{\mathbb E}\sup_{\tau\leq r\leq s} |X_{\tau,r}-Y_{\tau,r}|^{\hat{q}}ds.
\end{split}
\end{equation*}
Consequently, making use of \eqref{3.35} together with Gr\"{o}nwall's inequality and H\"{o}lder's inequality, it follows that
\begin{equation}\label{3.41}
\sup_{0\leq \tau\leq T}{\mathbb E}\sup_{\tau\leq r\leq t}|X_{\tau,r}(x)-Y_{\tau,r}(y)|^{\hat{q}} \leq C|x-y|^{\hat{q}},
\end{equation}
for every $\hat{q}\geq 1$.

\smallskip
\textbf{Case 2:} $\alpha-1-\alpha/p>0$. The proof procedures can be carried out by four cases:
$\alpha+\beta<2$, $\alpha+\beta=2$, $\alpha+\beta-\alpha/p=2$ and $\alpha+\beta-\alpha/p>2$. Since the reasoning is analogous, we present the details only for the first subcase.

From \eqref{3.14} and the interpolation inequality
\[
[f]_{p/(1-\epsilon/2),\alpha+\beta-1-\epsilon \alpha/2p} \leq  [f]_{p,\alpha+\beta-1}^{\frac{2-\epsilon}{2}}[f]_{\infty,\alpha+\beta-1-\alpha/p}^{\frac{\epsilon}{2}}, \ \ \forall \ \epsilon\in (0,1)
\]
we have
\begin{equation}\label{3.42}
\nabla U \in
\bigg[\bigcap_{0<\epsilon<1}
L^{\tfrac{2p}{2-\epsilon}}\big([0,T];\mathcal{C}_b^{\alpha+\beta-1-\tfrac{\epsilon\alpha}{2p}}(\mathbb{R}^d)
\big)\bigg]^{d\times d}.
\end{equation}
Since $\alpha>1$ and $p>\alpha/(\alpha-1)>2$, the combination of \eqref{3.31} and \eqref{3.42} yields \eqref{3.32}, which in turn implies \eqref{3.33} for big enough $\lambda$.

Moreover, as $U\in [L^\infty([0,T];{\mathcal C}_b^{\alpha+\beta-\alpha/p}({\mathbb R}^d))]^d$, one first derives an analogue of \eqref{3.39} for large $\hat{q}$, and subsequently obtains the analogue of \eqref{3.40}
\begin{equation}\label{3.43}
J_2(\tau,t)\leq C [\nabla U]_{\infty,\alpha+\beta-1-\alpha/p}^{\hat{q}}{\mathbb E} \int_\tau^t
|X_{\tau,s}-Y_{\tau,s}|^{\hat{q}}ds.
\end{equation}
Taking into account \eqref{3.28}, \eqref{3.33}, \eqref{3.43},  and applying Gr\"{o}nwall's inequality together with H\"{o}lder's inequality, one recovers estimate \eqref{3.41}.

In addition, for any $\tilde{\alpha}\in (0,\alpha)$,
\begin{equation}\label{3.44}
\int_{|z|\geq1}|z|^{\tilde{\alpha}}\nu(dz)=c(d,\alpha)\int_{|z|\geq1}|z|^{\tilde{\alpha}-d-\alpha}dz<\infty.
\end{equation}
Therefore, by \eqref{3.41}, \eqref{3.44} and \cite[Theorem 1.1]{Priola18}, Davie's type uniqueness for \eqref{1.1} is established.

\begin{remark}\label{rem3.1} From \eqref{3.29} and \eqref{3.38}, relation \eqref{3.33} follows directly provided that
\[
\frac{p(\alpha/2+\beta-1)}{\alpha}\geq1,  \ \ \forall \ \epsilon\in (0,1).
\]
However, if
\[
\frac{p(\alpha/2+\beta-1)}{\alpha}<1 \ \ {\rm and} \ \ \epsilon\in \Big(\frac{p(\alpha/2+\beta-1)}{\alpha},1\Big),
\]
then
\[
\int_{0<|z|<1}|z|^{\alpha+2\beta-2-\frac{2\epsilon\alpha}{p}-d}dz= \int_0^1r^{\alpha+2\beta-3-\frac{2\epsilon\alpha}{p}}dr=\infty.
\]
Thus, the right hand side of \eqref{3.29} diverges. Even though \eqref{3.37} and \eqref{3.38} remain valid, we must further verify that $\epsilon<p(\alpha/2+\beta-1)/\alpha$.
\end{remark}

\section{Proof of Theorem \ref{th1.3}}\label{sec4}\setcounter{equation}{0}
Our approach to constructing two distinct solutions is inspired by Tanaka, Tsuchiya and Watanabe \cite{TTW}, who established nonuniqueness for a time-independent drift under the conditions $d=1$ and $\alpha+\beta<1$. The construction here is actually one-dimensional and extends trivially to higher dimensions by taking $b=(b_1,b_2,\ldots,b_d)$ with $b_i\equiv 0$ for $i\geq 2$. Therefore, without loss of generality, we restrict to the case $d=1$.

For $\alpha\in (0,2)$, $\beta\in (0\vee (1-\alpha),1)$ and $1\leq p<\alpha/(\alpha+\beta-1)$, let $p<\hat{p}<\alpha/(\alpha+\beta-1)$ and define $b(t,x)=t^{-\frac{1}{\hat{p}}}\hat{h}(x)$ where
\begin{equation}\label{4.1}
\hat{h}(x)=\left\{\begin{array}{ll}
{\rm sign}(x)|x|^\beta, & {\rm if} \ |x|\leq\theta_0, \\ [0.1cm] h(x), \ & {\rm if}  \ |x|>\theta_0,
 \end{array}
\right.
\end{equation}
with $\theta_0>0$, $h\in \mathcal{C}^1_b(\mathbb{R})$ satisfying $h(\pm\theta_0)=\pm\theta_0^\beta$ and $h^\prime\geq 0$. Then $b\in L^p([0,T];{\mathcal C}_b^{\beta}({\mathbb R}))$ and $b$ is increasing in the spatial variable. For such a drift, we construct two distinct solutions to the following SDE
\begin{equation}\label{4.2}
X_t=\int_0^tb(s,X_s)ds+L_t, \quad t\in [0,T],
\end{equation}
i.e. \eqref{1.1} with zero initial data.

Choose a decreasing sequence $\{h_n\}_{n\geq 1}$ of uniformly bounded, Lipschitz continuous functions that decreases to $\hat{h}$ as $n\rightarrow \infty$. By the Cauchy--Lipschitz theorem, there exists a unique strong solution $X_t^n$ to the following SDE
\[
X_t^n=\int_0^ts^{-\frac{1}{\hat{p}}}h_n(X_s^n)ds+L_t, \quad t\in [0,T].
\]
For $n\geq 2$, the difference of $X_t^n$ and $X_t^{n-1}$ satisfies
\[
\frac{d}{dt}[X_t^n-X_t^{n-1}]=t^{-\frac{1}{\hat{p}}}[h_n(X_t^n)-h_{n-1}(X_t^{n-1})], \quad t\in [0,T].
\]
Thus $X_\cdot^n-X_\cdot^{n-1}\in W^{1,1}([0,T])$, ${\mathbb P}$-a.s..

Consider the function $f(x)=x_{+}=(x+|x|)/2$, which is Lipschitz continuous. With the help of the chain rule (see \cite[Theorem 2.2]{Bou01}),
\begin{equation}\label{4.3}
\begin{split}\frac{d}{dt}[X_t^n-X_t^{n-1}]_+=&t^{-\frac{1}{\hat{p}}}\mbox{sign}([X_t^n-X_t^{n-1}]_+)[h_n(X_t^n)-h_{n-1}(X_t^{n-1})] \\ \leq&\; t^{-\frac{1}{\hat{p}}}\mbox{sign}([X_t^n-X_t^{n-1}]_+)[h_{n-1}(X_t^n)-h_{n-1}(X_t^{n-1})] \\ \leq&\; t^{-\frac{1}{\hat{p}}}C_{n-1}[X_t^n-X_t^{n-1}]_+ \quad t\in [0,T],
\end{split}
\end{equation}
where the first inequality follows from $h_n\leq h_{n-1}$, while the second from the Lipschitz continuity of $h_{n-1}$ with Lipschitz constant $C_{n-1}$. Applying  the Gr\"{o}nwall inequality to \eqref{4.3} yields $[X_t^n-X_t^{n-1}]_+=0$, which suggests  $X_t^n\leq X_t^{n-1}$ for all $t\in [0,T]$, ${\mathbb P}$-a.s.. Hence, the sequence $\{X^n_t(\omega)\}_{n\geq 1}$ is decreasing, and by the monotone convergence theorem, converges to a limit as $n\rightarrow \infty$. Since each $X^n_t$ is ${\mathcal F}_t$-measurable, the limit, denoted by $X_{\max,t}$, is also ${\mathcal F}_t$-measurable and satisfies \eqref{4.2}.

Let $X=\{X_t\}_{t\in [0,T]}$ be a solution of \eqref{4.2}. Substituting $X$ and $b$ for $X^n$ and $b_n$ in \eqref{4.3}, respectively, leads to
\[
X_t\leq X^{n-1}_t, \quad \forall \ n\geq 2, \ t\in [0,T], \ {\mathbb P}-a.s..
\]
Therefore, $X_{\max}$ is the biggest solution among those of \eqref{4.2}.

We now show that this solution is independent of the choice of the decreasing sequence $\{h_n\}_{n\geq 1}$. Let $\{\tilde{h}_n\}_{n\geq 1}$ be another decreasing sequence of uniformly bounded Lipschitz continuous functions converging to $\hat{h}$ as $n\rightarrow \infty$, and let $\tilde{X}_t^n$ be the unique solution of the following SDE
\begin{equation}\label{4.4}
\tilde{X}_t^n=\int_0^ts^{-\frac{1}{\hat{p}}}\tilde{h}_n(\tilde{X}_s^n)ds+L_t, \quad t\in [0,T].
\end{equation}
Then $\tilde{X}^n_t(\omega)$ decreases to a limit as $n\rightarrow \infty$; denote this limit by $\tilde{X}_t(\omega)$.  The process $\tilde{X}$ is also a solution of \eqref{4.2}. Thus $\tilde{X}_t\leq X_{\max,t}$. By a similar argument to \eqref{4.3}, one has $\tilde{X}_t\geq X_{\max,t}$. Therefore, $\tilde{X}_t=X_{\max,t}$ and  we call $X_{\max,t}$ the maximum solution of \eqref{4.2}.

Analogously, let $\{\hat{h}_n\}_{n\geq 1}$ be an increasing sequence of uniformly bounded Lipschitz functions converging to $\hat{h}$ as $n\rightarrow \infty$. The corresponding limit $X_{\min}$ is the smallest solution among those of \eqref{4.2} and remains independent of the choice of the increasing sequence.  We call this solution as the minimum solution of \eqref{4.2}.

Let us estimate the maximum and minimum solutions of \eqref{4.2}. By Khinchin's law of the iterated logarithm for $\alpha$-stable processes,
\[
\limsup_{t\rightarrow 0}\frac{|L_t|}{t^{\frac{1}{\alpha}}(\log(\log(\frac{1}{t})))}=C(\alpha), \quad {\mathbb P}-a.s..
\]
which implies
\begin{equation}\label{4.5}
\lim_{t\rightarrow 0}[|L_t|t^{-\frac{1}{\alpha}+\epsilon}] =0, \quad {\mathbb P}-a.s., \quad \forall \ \epsilon>0.
\end{equation}
Let $C_0\in (0,1)$ be a fixed and small enough real number, and let $\delta=(1-1/\hat{p})/(1-\beta)<1/\alpha$. Define the event $\Omega_0$ by
\[
\Omega_0=\{\omega; \ \exists \ T_0(\omega)<T \ {\rm such \ that} \ |L_t(\omega)|\leq C_0t^{\delta}, \ \ \forall \ t\in [0,T_0(\omega)]\}.
\]
By \eqref{4.5},  $\mathbb{P}(\Omega_0)=1$.

Choose two positive constants $C_1$ and $C_2$ such that $C_1^\beta/\delta-C_0>C_1$ and $C_2^\delta C_1\leq \theta_0$. For $\omega\in \Omega_0$, define $Y_t$ by
\begin{equation}\label{4.6}
Y_t(\omega)=\left\{\begin{array}{ll}
C_1t^{\delta}, & {\rm if} \ t\in [0,T_0(\omega)\wedge C_2], \\ [0.1cm] C_1(T_0(\omega)\wedge C_2)^{\delta}, & {\rm if}  \ t\in [T_0(\omega)\wedge C_2,T],
 \end{array}
\right.
\end{equation}
and set
$$
\int_0^tb(s,Y_s(\omega))ds+L_t(\omega)=:Z_t(\omega).
$$
For $\omega\in \Omega_0$ and $t\in [0,T_0(\omega)\wedge C_2]$, straightforward calculation gives
\begin{equation}\label{4.7}
\begin{split}Z_t(\omega)=&C_1^\beta\int_0^ts^{-\frac{1}{\hat{p}}}s^{\beta\delta}ds+L_t(\omega) \\ \geq&
\frac{C_1^\beta t^{\beta \delta+1-1/\hat{p}}}{\beta \delta+1-1/\hat{p}}-C_0t^\delta  =(C_1^\beta/\delta-C_0)t^\delta
\geq C_1t^\delta=Y_t(\omega).
\end{split}
\end{equation}

Let $X_{\max}$ be the maximum solution of \eqref{4.2}. For $\omega\in \Omega_0$ and $t\in [0,T_0(\omega)\wedge C_2]$, we have
\begin{equation}\label{4.8}
X_{\max,t}(\omega)=\lim_{n\rightarrow \infty}X_t^n(\omega)=
\lim_{n\rightarrow \infty}\int_0^ts^{-\frac{1}{\hat{p}}}h_n(X_s^n(\omega))ds,
\end{equation}
where $\{h_n\}_{n\geq 1}$ is a decreasing sequence of uniformly bounded, Lipschitz continuous functions such that $h_n$ decreases to $\hat{h}$ as $n\rightarrow \infty$, and  $h_n^\prime\geq 0$ for every fixed $n$.

For every fixed $n$ and $\omega\in \Omega_0$, we get an analogue of \eqref{4.3} that
\begin{equation*}
\begin{split}
\frac{d}{dt}[Z_t(\omega)-X_t^n(\omega)]_+=&t^{-\frac{1}{\hat{p}}}\mbox{sign}([Z_t(\omega)-X_t^n(\omega)]_+)
[\hat{h}(Y_t(\omega))-h_n(X_t^n(\omega))]  \\ \leq&\; t^{-\frac{1}{\hat{p}}}\mbox{sign}([Z_t(\omega)-X_t^n(\omega)]_+)[h_n(Z_t(\omega))-h_n(X_t^n(\omega))] \\ \leq&\; t^{-\frac{1}{\hat{p}}}C_n[Z_t(\omega)-X_t^n(\omega)]_+ \quad t\in [0,T_0(\omega)\wedge C_2].
\end{split}
\end{equation*}
This, together with Gr\"{o}nwall's inequality, \eqref{4.7} and \eqref{4.8}, leads to
\[
Y_t(\omega)\leq Z_t(\omega)\leq X_{\max,t}(\omega), \quad \forall \ t\in [0,T_0(\omega)\wedge C_2] \ \ {\rm and} \ \  \omega\in \Omega_0.
\]

Similarly, define $\tilde{Y}_t$ by
\begin{equation}\label{4.9}
\tilde{Y}_t(\omega)=\left\{\begin{array}{ll}
-C_1t^{\delta}, & {\rm if} \ t\in [0,T_0(\omega)\wedge C_2], \\ [0.1cm] -C_1(T_0(\omega)\wedge C_2)^{\delta}, & {\rm if}  \ t\in [T_0(\omega)\wedge C_2,T],
 \end{array}
\right.
\end{equation}
and one verifies that
\begin{equation*}
\begin{split}\int_0^tb(s,\tilde{Y}_s(\omega))ds+L_t(\omega)=&-C_1^\beta\int_0^ts^{-\frac{1}{\hat{p}}}s^{\beta\delta}ds
+L_t(\omega) \\ \leq&\;
-\frac{C_1^\beta t^{\beta \delta+1-1/\hat{p}} }{\beta \delta+1-1/\hat{p}}+C_0t^\delta\leq -C_1t^\delta=\tilde{Y}_t(\omega),
\end{split}
\end{equation*}
for all $t\in [0,T_0(\omega)\wedge C_2]$. Thus the minimum solution $X_{\min}$ satisfies
\[
X_{\min,t}(\omega)\leq \tilde{Y}_t(\omega), \quad t\in [0,T_0(\omega)\wedge C_2] \ {\rm and} \ \omega\in \Omega_0.
\]

By \eqref{4.6} and \eqref{4.9}, it follows that
\[
X_{\max}\neq X_{\min}.
\]
Hence the maximum and minimum solutions of \eqref{4.2} are distinct.

\section{Conclusions}
In this work we studied time-inhomogeneous stochastic differential equations driven by
symmetric rotationally invariant $\alpha$-stable processes with $\alpha \in (0,2)$ and with
drift coefficients belonging to subcritical Lebesgue--H\"{o}lder spaces
$L^p([0,T];{\mathcal C}_b^{\beta}({\mathbb R}^d;{\mathbb R}^d))$. Our analysis combined
probabilistic and analytic techniques, including Prohorov's theorem, Skorohod's
representation, regularity estimates for fractional Kolmogorov equations, and
It\^{o}--Tanaka's trick.

We established the weak well-posedness of such SDEs under the condition
$\beta \in (0,1)$, $\alpha+\beta>1$, and $p>\alpha/(\alpha+\beta-1)$. Furthermore,
we proved pathwise uniqueness and Davie's type uniqueness when
$\beta>1-\alpha/2$, thereby extending previous results to the fully subcritical regime.
These findings provide a unified framework for the regularisation-by-noise phenomenon
for SDEs driven by $\alpha$-stable processes, covering both Brownian motion ($\alpha=2$)
and jump-driven cases $\alpha \in (0,2)$.

To complement our positive results, we constructed a counterexample showing that
pathwise uniqueness may fail for drifts in the supercritical regime. This illustrates
the sharpness of our conditions and highlights the intrinsic limitations of
regularisation effects by stable processes in the presence of highly irregular drifts.

\end{document}